\numberwithin{equation}{section} 
\numberwithin{figure}{section} 
\newtheorem{theorem}{Theorem}[section]
\newtheorem{proposition}[theorem]{Proposition}
\newtheorem{lemma}[theorem]{Lemma}
\newtheorem{example}[theorem]{Example}
\newtheorem{assumption}{Assumption}
\newtheorem{remark}{Remark}
\newcommand{\mc}{\mathcal}
\newcommand{\mb}{\mathbb}
\newcommand*\samethanks[1][\value{footnote}]{\footnotemark[#1]}
\DeclareMathOperator{\E}{\mathbb{E}} 
\title{Optimal control of systems with noisy memory and BSDEs with Malliavin derivatives}
\begin{document}


\author{K.R. Dahl\thanks{Department of Mathematics, University of Oslo, Pb. 1053 Blindern, 0316 Oslo, Norway. The research leading to these results has received funding from the European Research Council under the European Community's Seventh Framework Programme (FP7/2007-2013) / ERC grant agreement no. 228087.}
\and S.-E. A. Mohammed\thanks{Department of Mathematics, SIU-C, Carbondale, Illinois 62901, USA. Research supported in part by US NSF award DMS-0705970.}
\and B. {\O}ksendal\samethanks[1]
\and E. E. R{\o}se\samethanks[1]}

\date{21 August 2015}

\maketitle

\textbf{Keywords:} Stochastic control, noisy memory, maximum principle, time-advanced BSDE, Malliavin derivative.

\smallskip

\textbf{MSC (2010):} 93EXX, 93E20, 60J75, 60H07, 34K50.

\begin{abstract}
In this article we consider a stochastic optimal control problem where the dynamics of the state process, $X(t)$, is a controlled stochastic differential equation with jumps, delay and \emph{noisy memory}. The term noisy memory is, to the best of our knowledge, new. By this we mean that the dynamics of $X(t)$ depend on $\int_{t-\delta}^t X(s) dB(s)$ (where $B(t)$ is a Brownian motion). Hence, the dependence is noisy because of the Brownian motion, and it involves memory due to the influence from the previous values of the state process.

We derive necessary and sufficient maximum principles for this stochastic control problem in two different ways, resulting in two sets of maximum principles. The first set of maximum principles is derived using Malliavin calculus techniques, while the second set comes from reduction to a discrete delay optimal control problem, and application of previously known results by {\O}ksendal, Sulem and Zhang. The maximum principles also apply to the case where the controller has only partial information, in the sense that the admissible controls are adapted to a sub-$\sigma$-algebra of the natural filtration.
\end{abstract}
\section{Introduction}


In this article, we develop two approaches for analyzing optimal control for a new class of stochastic systems with noisy memory. The main objective is to derive necessary and sufficient criteria for  maximizing the performance functional on the underlying set of admissible controls. One should note the following unique features of the analysis:

\begin{itemize}
\item{} The state dynamics follows a controlled stochastic differential equation (SDE) driven by {\it noisy memory}: The evolution of the state $X$ at any time $t$ is dependent on its past history
$\int_{t-\delta}^t X(s) \, dB(s)$ where $\delta$ is the memory span and $dB$ is 
 white noise. In our opinion, it is reasonable and natural to consider this type of noisy dependence of the past.
\item{} The maximization problem is solved through a new backward stochastic differential equation (BSDE) that involves not only partial derivatives of the Hamiltonian but also their Malliavin derivatives.
\item{} Two independent approaches are adopted for deriving necessary and sufficient maximum principles for the stochastic control problem: The first approach is via Malliavin calculus and the second is a reduction of the dynamics to a two-dimensional controlled SDE with {\it discrete delay} and no noisy memory. In the second approach, the optimal control problem is then solved without resort to Malliavin calculus.
\item{} A natural link between the above two approaches is established as we show that a solution of the noisy memory BSDE can be obtained from a solution of the two-dimensional (time-) advanced BSDE (ABSDE) and vice versa.
\item To illustrate the usefulness of the Malliavin calculus approach, we outline in Section 8 an 
extension of the noisy memory problem where {\it the state dynamics cannot be reduced to a two-dimensional setting with discrete delay.}

\end{itemize}

To be somewhat more specific, we will outline below the scope of the results in the article. More precise regularity and measurability assumptions are provided in Sections 2,3 and 4.

The dynamics is described by the following one-dimensional controlled stochastic functional differential equation with {\it noisy memory}:

\begin{align}
  dX(t) &= b(t, X(t), Y(t), Z(t), \pi(t)) dt\nonumber\\
&\quad + \sigma(t, X(t), Y(t), Z(t), \pi(t))\, dB(t) \label{eq:StateEquation}\\
&\quad+\int_{\mb{R}} \gamma(t, X(t), Y(t), Z(t), \pi(t), \zeta) \tilde{N}(dt, d\zeta);  &t \in [0,T], \nonumber\\
X(t) &= \xi(t);  & t \in [-\delta, 0].\nonumber
\end{align}


\noindent In the above SDE, $\delta > 0$ is the memory span, $Y(t) := X(t - \delta)$ and the process
\begin{align}
\label{eq: 2.4}
Z(t) := \int_{t - \delta}^{t} X(s) \, d B(s)
\end{align}

\noindent stands for the {\it noisy memory} of the process $X$ at time $t$. The control process $\pi$  satisfies appropriate measurability and integrability requirements, while the random coefficients $b, \sigma, \gamma$ satisfy regularity and differentiability conditions. The dynamics is driven by a one-dimensional Brownian motion $B$,  a compensated Poisson random measure $\tilde{N}$ and an initial process $\xi$ on $[ -\delta, 0]$.

The main objective is to determine necessary and sufficient conditions for finding the maximizing control $\pi^*$ of the performance functional $J(\cdot)$, given by
\begin{align}
\label{eq: 2.6}
 J(\pi) := \E\Big[\int_0^T f(t,X(t),Y(t),Z(t),\pi(t)) dt + g(X(T))\Big],
\end{align}
\noindent for each admissible control process $\pi$. In the above relation, the symbol $\E[\cdot]$ denotes the expectation with respect to an underlying probability measure $P$ and $f, g$
 are given $C^1$ random functions satisfying appropriate measurability and integrability conditions.

In Section 2, we define the Hamiltonian associated with our maximal control problem together with a backward SDE (BSDE) ((2.19)-(2.21)). In Section 3, we obtain a sufficient maximum principle
(Theorem 3.1) which states that a solution of the BSDE yields an optimal control $\pi^*$ of the noisy memory control problem. This is achieved under sufficient Malliavin regularity and concavity conditions on the Hamiltonian and the performance functional. Under sufficient differentiability requirements on the underlying functions, we establish G\^ateaux-type differentiability for the performance functional $J$ (Lemma 4.4 and Theorem 4.5). This expresses the necessary condition for the optimal control problem in terms of the Hamiltonian
(Theorems 4.6 and 4.7).

In Section 5, we reduce the noisy memory dynamics to a $2D$ discrete delay format. By adapting the analysis in [13], we are able to establish necessary and sufficient conditions for solving the maximal control problem with noisy memory (Theorems 5.1, 5.2). A solution of the noisy memory BSDE is obtained using the solution of the $2D$ advanced BSDE (Theorem 6.1). 

In Section 7, an example with an optimal consumption problem is given, illustrating the two approaches to the maximal control problem.

In Section \ref{sec:generalization}, we show how the Malliavin calculus approach can be applied to more general noisy memory problems, where the reduction approach to the 2D dynamics is not feasible. In particular, we replace $Z(t)$ in (\eqref{eq:StateEquation} -\eqref{eq: 2.4}) by the more general noisy memory term 
\begin{align*}
 Z'(t):=\int_{t-\delta}^t \phi(t,s)X(s)dB(s).
\end{align*}


\section{The optimization problem}\label{sec: TheProblem}

In this section we formulate our main optimal control problem  for stochastic systems with noisy memory.

Let $B_t(\omega) = B(t, \omega); (t, \omega) \in [-\delta, \infty) \times \Omega$ be a Brownian motion and $\tilde{N}(dt, d\zeta) := N(dt, d\zeta) - \nu($d$\zeta)$d$t$ an independent compensated Poisson random measure, respectively, on a complete filtered probability space $(\Omega, \mc{F}, \{\mc{F}_t\}_{t \geq 0}, P)$. We assume that $\mb{F} := \{\mc{F}_t\}_{t \geq 0}$ is the filtration generated by $B$ and $\tilde{N}$ (augmented with the $P$-null sets)
and $\nu(d\zeta)$ is the L\'evy measure corresponding to the jump measure $N(dt,d\zeta)$. Let  $\mathbb G:=\{\mathcal{G}_t\}$ be a  sub filtration of $\mathbb F$, with $\mathcal G_t\subset \mathcal F_t$, and each $\mathcal G_t$ augmented with the $P$-null-sets. Note that no other conditions on $\mb{G}$ are required. In particular, our results hold for $\mc{G}_t = \mc{F}_0$ for all $t \geq 0$ (for example a deterministic control). We denote the set of admissible controls  by $\mathcal A_{\mathbb G}$. This set is contained in the set of all processes that are c\`{a}dl\`{a}g, in $L^2(\Omega\times[0,T])$, measurable wrt. the filtration $\mb{G}$ and take values in a subset $\mathcal V$ of $\mathbb R$.

Consider the following controlled stochastic differential equation (SDE) with discrete delay and {\it noisy memory}:
\begin{align}\label{eq:NoisyMemoryMainX}
\begin{split}
  dX(t) &= b(t, X(t), Y(t), Z(t), \pi(t)) dt\\
&\quad + \sigma(t, X(t), Y(t), Z(t), \pi(t)) d B(t) \\
&\quad+\int_{\mb{R}} \gamma(t, X(t), Y(t), Z(t), \pi(t), \zeta) \tilde{N}(dt, d\zeta); 
\end{split} &t \in [0,T], \\
X(t) &= \xi(t);  & t \in [-\delta, 0].
\end{align}
Here
\begin{align}\label{eq: 2.3}
 Y(t) := X(t - \delta)
\end{align}
\noindent where the positive constant $\delta$ is a discrete time-delay, while
\begin{align}\label{eq:NoisyMemoryMainZ}
Z(t) := \int_{t - \delta}^{t} X(s) d B(s)
\end{align}
\noindent represents the {\it noisy memory} of the process $X$ at time $t$. The process 
$\pi \in \mathcal A_{\mathbb G}$ is our control.
\begin{remark}
\label{remark: differentBM}
It is possible to have a different Brownian motion, say $\tilde B(t)$, driving the noisy memory process $Z(t)$ in 
(2.4).
In Sections \ref{sec: TheProblem}, \ref{sec: sufficient} and \ref{sec: NMP}, the only change would be that the Malliavin derivative $D_t$ with respect to $B$ should be replaced by the Malliavin derivative $\tilde D_t$ with respect to $\tilde B$ in (\ref{eq: 2.11}) and subsequent relations.  In Section \ref{sec: reduction}, everything still holds if the two Brownian motions are independent. If they are not independent, we can represent $\tilde B$ as a combination of $B$ and another independent Brownian motion $B_2$ as follows:
\begin{align*}
d\tilde B(t)=\alpha(t)dB(t)+\beta(t)dB_2(t),
\end{align*}
where $\alpha(t)=\frac{d}{dt}\E[\tilde B(t) B(t)]$ and $\alpha^2(t)+\beta^2(t)=1$. We omit the details.
\end{remark}

On the coefficient functions
\begin{align}b : \Omega\times[0,T] \times \mb{R} \times \mb{R} \times \mb{R} \times \mc{V} \rightarrow \mb{R},\\
\sigma : \Omega\times[0,T] \times \mb{R} \times \mb{R} \times \mb{R} \times \mc{V} \rightarrow \mb{R},\\
\gamma : \Omega\times[0,T] \times \mb{R} \times \mb{R} \times \mb{R} \times \mc{V} \times \mb{R} \rightarrow \mb{R},\end{align}
we impose the following set of assumptions
\begin{assumption}\label{ass:existenceUniqueness}$ $
\label{assumption: E}
\begin{enumerate}[i)]
\item\label{hyp:C1} The functions $ b(\omega, t,\cdot)$, $\sigma(\omega,t \cdot)$ and $\gamma(\omega,t,\zeta,\cdot)$
are assumed to be $C^1$ for each fixed $\omega,t, \zeta$, and  $\nabla$ denotes the gradients with respect to the variables $x,y,z,u$
\item \label{hyp:measurable}The functions  $b(\cdot,x,y,z,u)$ and $\sigma(\cdot, x,y,z,u)$, and $\gamma(\cdot, x,y,z,u,\zeta)$ are predictable
 for each $x,y,z,u$.
\item \emph{Lipschitz condition:} The functions $b,\sigma$
are Lipschitz continuous in the variables $x,y,z$, with the Lipschitz constant  independent of the variables $t,u, \omega$.
Also, there exists a function $\mathcal L\in L^2(\nu)$, independent of  $t,u, \omega$, such that
\begin{align}
 |\gamma(\omega,&t,x_1,y_1,z_1,u,\zeta)-\gamma(\omega,t,x_2,y_2,z_2,u,\zeta)|\\
&\leq \mathcal L(\zeta)\{|x_1-x_2|+|y_1-y_2|+|z_1-z_2| \}, \quad\nu-a.e. \zeta.
\end{align}
\item \emph{Linear growth:} The functions $b,\sigma, \gamma$ 
satisfy the linear growth condition in the variables $x,y,z$, with the linear growth  constant independent of the variables $t,u, \omega$
Also, there exists a non-negative function $\mathcal K\in L^2(\nu)$, independent of  $t,u, \omega$, such that
\begin{align}|\gamma(\omega,&t,x,y,z,u,\zeta)|\\
&\leq \mathcal K(\zeta)\{1+|x|+|y|+|z| \},\quad\nu-a.e. \zeta.
\end{align}
\end{enumerate}
\end{assumption}
Assumption  \ref{assumption: E} $\ref{hyp:C1})$ and Assumption \ref{assumption: E} $\ref{hyp:measurable})$ are sufficient to ensure the integrands  in equation (2.1) 
have  predictable versions, whenever $X$ is c\`{a}dl\`{a}g and adapted. It is always assumed that the $\tilde N$-integral is taken with respect to the predictable version of $\gamma(t, X(t), Y(t), Z(t), \pi(t), \zeta)$. Together with the Lipschitz and linear growth conditions, this ensures that for every $\pi\in\mathcal{A}_\mathbb{G}$,
there exists a unique c\`{a}dl\`{a}g adapted solution $X=X^{\pi}$ to the equation (2.1), 
satisfying
\begin{align}\label{ineq:XmomentEstimate}
\E[\sup_{t\in[-\delta,T]}|X(t)|^2]<\infty.
\end{align}
This can be seen, for example, by regarding equation (2.1) 
as a stochastic functional differential equation in
the sense of \cite{BCDDR} (cf. [9]).

The performance functional $J(\pi)$ of $\pi \in \mc{A}_{\mathbb G}$ is given by
\begin{align}\label{eq: performanceFunctional}
 J(\pi) := \E\Big[\int_0^T f(t,X(t),Y(t),Z(t),\pi(t)) dt + g(X(T))\Big],
\end{align}
\noindent where $\E[\cdot]$ denotes expectation with respect to $P$ and 
\begin{align*}f &:\Omega\times [0,T] \times \mb{R} \times \mb{R} \times \mb{R} \times \mc{V} \rightarrow \mb{R}&\textnormal{and}\\
 g &: \Omega \times \mb{R} \rightarrow \mb{R}
\end{align*} 
are given functions.
Throughout this paper, the functions $f,g$ are  assumed to satisfy the following conditions:
\begin{assumption}$ $
\label{assumption: F}
 \begin{enumerate}[i)]
\item  The functions $ f(\omega, t,\cdot)$ and $g(\omega,\cdot)$ are $C^1$  for each $t,\omega$.
 \item The functions $ f(\cdot,x,y,z)$ are progressively measurable, and $ g(\cdot,x,z)$ is $\mathcal F_T$ measurable.
\item Whenever $\pi\in\mathcal A_{\mathbb G}$, with corresponding $X(t) = X^{\pi}(t)$, $Y(t) = Y^{\pi}(t)$ and $Z(t) = Z^{\pi}(t)$,  it holds that \begin{align*}
\E\Big[\int_0^T (|f|+(\nabla f)^2)(t,X(t),Y(t),Z(t),\pi(t)) dt + (|g|+(g')^2)(X(T))\Big] < \infty.
\end{align*}
\end{enumerate}
\end{assumption}

The problem we will  consider is to find an optimal control $\pi^* \in \mc{A}_{\mathbb G}$ for $J(\cdot)$, i.e. to find $\pi^* \in \mc{A}_{\mathbb G}$ such that
\begin{align}
 \label{eq: 2.8}
\sup_{\pi \in \mc{A}_{\mathbb G}} J(\pi) = J(\pi^*).
\end{align}
To do so, we will require the following notion of the generalized Malliavin derivative for Brownian motion.

\subsection{The generalized Malliavin derivative for Brownian motion}
\label{subsec: GenMalliavin}

We refer to Nualart~\cite{Nualart}, Sanz-Sol\`{e}~\cite{SanzSole} and Di Nunno et al.~\cite{DOP09} for information about the Malliavin derivative $D_t$ for Brownian motion $B(t)$ and, more generally, L\'{e}vy processes. In Aase et al.~\cite{AaseEtAl}, $D_t$ was extended from the space $\mb{D}_{1,2}$ to $L^2(P)$, where $\mb{D}_{1,2}$ denotes the classical space of Malliavin differentiable $\mc{F}_T$-measurable random variables. The extension is such that for all $F \in L^2(\mc{F}_T, P)$, the following holds:

\begin{enumerate}
\item[$(i)$] $D_t F \in (\mc{S})^*$, where $(\mc{S})^* \supseteq L^2(P)$ denotes the Hida space of stochastic distributions,

\item[$(ii)$] the map $(t,\omega) \mapsto \E[D_t F | \mc{F}_t]$ belongs to $L^2(\mc{F}_T, \lambda \times P)$, where $\lambda$ denotes the Lebesgue measure on $[0,T]$.

Moreover, the following \emph{generalized Clark-Ocone theorem} holds:

\item[$(iii)$]
\begin{align}
F = \E[F] + \int_0^T \E[D_t F | \mc{F}_t] dB(t).\label{Clark-Ocone}
\end{align}
See \cite{AaseEtAl}, Theorem 3.11, and also \cite{DOP09}, Theorem 6.35.

\end{enumerate}
Notice that  by combining It\^o's isometry with the Clark-Ocone theorem, we obtain
\small
\begin{align}\label{eq:normOfDtF-varF}
 \E\Big[\int_0^T \E[D_t F|\mathcal F_t]^2 dt\Big]= \E\Big[\Big(\int_0^T \E[D_t F|\mathcal F_t]dB(t)\Big)^2\Big]=\E[(F^2-\E[F]^2)]
\end{align}\normalsize

As observed in Agram et al.~\cite{AgramOksendal}, we can also apply the Clark-Ocone theorem to show that:
\begin{proposition}(Generalized duality formula)
\label{prop: 2.1-NY}
Let $F \in L^2(\mc{F}_T, P)$ and let $\varphi(t) \in L^2(\lambda \times P)$ be adapted. Then

\begin{align}
\label{eq: gen-duality}
\E \Big[ F \int_0^T \varphi(t) dB(t) \Big] = \E \Big[ \int_0^T \E[D_t F | \mc{F}_t] \varphi(t) dt \Big]
\end{align}
\end{proposition}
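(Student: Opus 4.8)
The plan is to combine the generalized Clark--Ocone theorem \eqref{Clark-Ocone} with the bilinear form of It\^o's isometry, in the same spirit as the scalar identity \eqref{eq:normOfDtF-varF}. First I would use property $(iii)$ to write
\begin{align*}
F = \E[F] + \int_0^T \E[D_s F \mid \mc{F}_s]\,dB(s),
\end{align*}
and substitute this representation of $F$ into the left-hand side of \eqref{eq: gen-duality}. Multiplying out and taking expectations splits the quantity into two pieces: a constant multiple $\E[F]\,\E[\int_0^T \varphi(t)\,dB(t)]$ of the mean of the It\^o integral of $\varphi$, and the expected product of two It\^o integrals.

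The first piece vanishes, because $\varphi$ is adapted and lies in $L^2(\lambda\times P)$, so $\int_0^T \varphi(t)\,dB(t)$ is a genuine It\^o integral with zero mean. For the second piece I would invoke the polarization (bilinear) version of It\^o's isometry,
\begin{align*}
\E\Big[\Big(\int_0^T \E[D_t F\mid\mc{F}_t]\,dB(t)\Big)\Big(\int_0^T \varphi(t)\,dB(t)\Big)\Big] = \E\Big[\int_0^T \E[D_t F\mid\mc{F}_t]\,\varphi(t)\,dt\Big],
\end{align*}
which is exactly the right-hand side of \eqref{eq: gen-duality}, completing the identity.

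The main point to verify --- and the only place where the generalized (extended) Malliavin derivative enters substantively --- is that the process $t\mapsto \E[D_t F\mid\mc{F}_t]$ is a legitimate It\^o integrand, i.e. that it is $\mb{F}$-adapted and belongs to $L^2(\lambda\times P)$. Adaptedness is immediate from the conditioning on $\mc{F}_t$, and square-integrability is precisely the content of property $(ii)$ of the extension of $D_t$ to all of $L^2(\mc{F}_T,P)$. With both integrands in $L^2(\lambda\times P)$, the bilinear isometry applies verbatim, and a Cauchy--Schwarz estimate guarantees that $F\int_0^T\varphi\,dB$ is integrable so that every expectation above is well defined. I do not expect a serious obstacle beyond this bookkeeping, since the substantive analytic work --- the validity of the Clark--Ocone representation and the $L^2$-bound in $(ii)$ --- has already been imported from \cite{AaseEtAl}.
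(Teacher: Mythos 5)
Your proposal is correct and follows essentially the same route as the paper's own proof: substitute the generalized Clark--Ocone representation of $F$, discard the $\E[F]$ term because the It\^o integral of the adapted $L^2$ process $\varphi$ has zero mean, and conclude by the polarized It\^o isometry. Your additional remarks on why $t\mapsto\E[D_tF\mid\mc{F}_t]$ is an admissible integrand (adaptedness plus property $(ii)$) are exactly the bookkeeping the paper leaves implicit.
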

\begin{proof}

By $(ii)$-$(iii)$ above and the It\^{o} isometry we have

\begin{align*}
\E \Big[ F \int_0^T \varphi(t) dB(t) \Big] &= \E \Big[ \Big(\E[F]+\int_0^T \E[D_t F | \mc{F}_t] dB(t)\Big) \Big(\int_0^T \varphi(t) dB(t)\Big)  \Big] \\[\smallskipamount]
&=\E \Big[ \Big(\int_0^T \E[D_t F | \mc{F}_t] dB(t)\Big) \Big(\int_0^T \varphi(t) dB(t)\Big)  \Big] \\[\smallskipamount]
                                &= \E \Big[ \int_0^T \E[D_t F | \mc{F}_t] \varphi(t) dt \Big].
\end{align*}

\end{proof}

For further results regarding the generalized Malliavin derivative, see {\O}ksendal and R{\o}se~\cite{OksendalRose}.

\subsection{The Hamiltonian and the associated BSDE}
\label{subsec: HamiltonianBSDE}

To solve problem~\eqref{eq: 2.8} we formulate a stochastic maximum principle, suitably modified for this situation:

First, define the Hamiltonian
\begin{align}
\mc{H} : [0,T] \times \mb{R} \times \mb{R} \times \mb{R} \times \mc{V} \times \mb{R} \times \mb{R} \times L^2(\nu) \rightarrow \mb{R}
\end{align}
by
\begin{align}
\mc{H}(t,x,y,z,u,p,q,r(\cdot)) &:= f(t,x,y,z,u) + b(t,x,y,z,u)p \nonumber\\
+ \sigma(t,x,y,&z,u)q + \int_{\mb{R}}\gamma(t,x,y,z,u,\zeta)r( \zeta)\nu(d\zeta)\label{Hamiltonian}
\end{align}
\noindent

Associated with the above Hamiltonian we have the following backward stochastic differential equation (BSDE) in the unknown processes $p, q$ and $r$:
\begin{align}
 dp(t) &= -\E[\mu(t) | \mc{F}_t] dt + q(t) dB(t) + \int_{\mb{R}} r(t,\zeta) \tilde{N}(dt, d\zeta); \hspace{0.5cm} 0 \leq t \leq T \nonumber\\
p(T) &= g'(X(T))\label{eq: 2.10}
\end{align}
\noindent where
\small\begin{align}
 \label{eq: 2.11}
\mu(t)= \frac{\partial{\mc{H}}}{\partial{x}}(t) + \frac{\partial{\mc{H}}}{\partial{y}}(t + \delta) \boldsymbol{1}_{[0,T-\delta]}(t)+ \int_{t}^{t+\delta} \mb{E} \Big[ D_t \big( \frac{\partial{\mc{H}}}{\partial{z}}(s) \big) | \mc{F}_t \Big] \boldsymbol{1}_{[0,T]}(s)ds.
\end{align}\normalsize
\noindent Here, $$\frac{\partial{\mc{H}}}{\partial{x}}(t)$$ is abbreviated notation for
\begin{align}\frac{\partial{\mc{H}}}{\partial{x}}(t, X(t), Y(t), Z(t), \pi(t), p(t), q(t), r(t,\cdot))\end{align} etc.


In particular, we say  the processes $p,q,r$ are  \emph{adjoint processes} corresponding to $\pi$
if the following holds: $p$ is  c\`adl\`ag and adapted, $q,r$ are predictable,
\begin{align}\label{est:adjointEquations}
 \E\big[\sup_{t\in [0,T]}p(t)^2+\int_0^T \bigg\{ q(t)^2 dt +\int_{\mathbb{R}}r(t,\zeta)^2\nu(d\zeta)+ \frac{\partial\mathcal H}{\partial z}(t)^2 \bigg \} dt\Big] <\infty,
\end{align}
 and the equalities (\ref{eq: 2.10}) holds $P$-a.s. for every $t \in [0,T]$.

\begin{remark}
 Note that due to the conditional expectation of the Malliavin derivative in the adjoint equation  \eqref{eq: 2.10} and the Clark-Ocone formula \eqref{Clark-Ocone}, the process $\mu$ has the alternative description
\begin{align*}
 \mu(t)= \frac{\partial{\mc{H}}}{\partial{x}}(t) + \frac{\partial{\mc{H}}}{\partial{y}}(t + \delta) \boldsymbol{1}_{[0,T-\delta]}(t)+ \int_{t}^{t+\delta}  \theta_s(t) \boldsymbol{1}_{[0,T]}(s)ds,
\end{align*}
where, for fixed $s$,  $\theta_s(t)$ is the unique process satisfying
\begin{align}
 \frac{\partial\mathcal H}{\partial z}(s)=\E\Big[ \frac{\partial\mathcal H}{\partial z}(s)\Big]+\int_0^s \theta_s(t) dB(t).
\end{align}
Although the proofs in Sections \ref{sec: sufficient}-\ref{sec: NMP} can  be carried out without resorting to Malliavin calculus, we have found the notation useful. We also remark that we have not been able to prove Theorem \ref{thm: NoisyMemBSDE} in Section \ref{sec: SolutionBSDE}, without using Malliavin calculus. Moreover, we emphasise that Malliavin calculus is needed as an efficient tool to actually \emph{find} this process $\theta_s(t)$.
See the example in Section 7.

\end{remark}

Note that the BSDE~(\ref{eq: 2.10}) is time-advanced in the sense that $\mu(t)$ involves future values like $X(t+\delta)$ etc. In this way the BSDE is similar to the time-advanced BSDE in  [13], 
 but note that the Malliavin derivative in the last term of (\ref{eq: 2.11}) constitutes a new ingredient. To the best of our knowledge, such BSDEs with Malliavin derivatives have not been studied before.

\subsection{Short-hand notation}\label{sec:ShortHandNotation}
Before we continue with the maximum principles, we introduce some abbreviated notation.
For any admissible control $\pi\in\mathcal{A}_{\mathbf G}$, we write $\mathbf X=(X,Y,Z)$ for the corresponding processes from the state equation (2.1) 
or $\mathbf X^\pi=(X^\pi, Y^\pi, Z^\pi)$, if confusion may occur. Similarily, adjoint processes corresponding to $\pi$ are denoted by $p,q,r$ or $p^\pi,q^\pi,r^\pi$.  Often, we will  mark a control with a diacritic. Then the corresponding processes will be marked with the same diacritic, i.e. the processes $\hat{\mathbf X}=\hat X, \hat Y, \hat Z$ and $\hat p, \hat q, \hat r$ corresponds to the control $\hat\pi$.

When any of the coefficient functions $b,\sigma, \gamma$, the utility function $f$, the Hamiltonian  $\mathcal H$ or any of their derivatives,  is evaluated in a set of processes all corresponding to the same control, we typically omit all variables except the time variable, and mark the function with the control or the diacritic when necessary.  As an example,  we write
\begin{align*}
\mathcal H(t) := \mathcal H^\pi(t)&:=\mathcal H(t, {\mathbf{X}}(t),{\pi}(t),{p}(t),{q}(t),{r}(t, \cdot))\\
\hat{\mathcal H}(t) &:= \mathcal H (t, \hat{\mathbf{X}}(t),\hat{\pi}(t),\hat{p}(t),\hat{q}(t),\hat{r}(t, \cdot)).
\end{align*}

\section{A sufficient maximum principle}
\label{sec: sufficient}
In this section we assume that the set $\mathcal V$ of all admissible controls is 
convex. Our main result here is a sufficient maximum principle for the system with noisy memory.

\begin{theorem}(Sufficient maximum principle for systems with noisy memory)\\
\label{thm: sufficient}
Let $\hat{\pi} \in \mc{A}_{\mathbb G}$ with corresponding $\hat{X}, \hat{Y}, \hat{Z}$, and adjoint processes $\hat{p}$, $\hat{q}, \hat{r}$.
%
%
%
Moreover, suppose that the following hold:
\begin{enumerate}[i)]
\item The functions
\begin{align}x \rightarrow g(x)\end{align}
and
\begin{align}
 \label{eq: 2.12}
(x,y,z,u) \rightarrow \mc{H}(t,x,y,z,u,\hat{p}(t),\hat{q}(t), \hat{r}(t,\cdot))
\end{align}
are concave a.s. for all $t \in [0,T]$.
\item

For every $v\in\mathcal V$
\begin{align} 
 \E\Big[\frac{\partial}{\partial u}\mathcal H\big( t, \hat {\mathbf X}(t), \hat\pi(t), \hat p(t), \hat q(t), \hat r(t)\big)\Big| \mathcal{G}_t \Big](v-\hat\pi(t))\leq 0\label{eq: 2.14}
\end{align} 
$dt\times P$-a.s.
\end{enumerate} 
Then $\hat{\pi}$ is an optimal control for the noisy memory control problem~(\ref{eq: 2.8}).
\end{theorem}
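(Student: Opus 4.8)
The plan is to show $J(\hat\pi)\ge J(\pi)$ for every $\pi\in\mc A_{\mathbb G}$ by bounding the difference $\Delta J:=J(\hat\pi)-J(\pi)$ from below and reducing it, after a sequence of cancellations, to the single variational term in hypothesis (ii). Write $\widetilde X(t):=\hat X(t)-X^\pi(t)$, and likewise $\widetilde Y,\widetilde Z,\widetilde\pi$ for the differences of the remaining state and control processes; note $\widetilde X(t)=0$ on $[-\delta,0]$, $\widetilde Y(t)=\widetilde X(t-\delta)$ and $\widetilde Z(t)=\int_{t-\delta}^t\widetilde X(s)\,dB(s)$. Splitting $\Delta J$ into its running and terminal parts, I would first use concavity of $g$ together with the terminal condition $\hat p(T)=g'(\hat X(T))$ to obtain $\E[g(\hat X(T))-g(X(T))]\ge \E[\hat p(T)\widetilde X(T)]$, and rewrite the running part by substituting $f=\mc H-b\hat p-\sigma\hat q-\int_{\mb R}\gamma\,\hat r\,\nu(d\zeta)$, so that $f$ is replaced throughout by the Hamiltonian evaluated along the two competing controls at the common adjoint processes $\hat p,\hat q,\hat r$.

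The second step applies the It\^o product formula to $\hat p(t)\widetilde X(t)$, using the dynamics of $\widetilde X$ (the difference of the two state equations) and the adjoint BSDE \eqref{eq: 2.10}. Taking expectations kills the martingale parts, uses $\widetilde X(0)=0$, and turns the $\tilde N$-cross term into an integral against $\nu(d\zeta)$; the $b$-, $\sigma$- and $\gamma$-contributions produced this way cancel exactly the corresponding terms coming from the substitution for $f$. Writing $\mc H^{\pi}(t)$ for the Hamiltonian evaluated at $(X,Y,Z,\pi)$ but at the hatted adjoint processes, and using that $\widetilde X$ is adapted to drop the conditioning in $\E[\hat\mu\mid\mc F_t]$, what remains is
\begin{align*}
\Delta J\ge \E\Big[\int_0^T\big\{\hat{\mc H}(t)-\mc H^{\pi}(t)\big\}\,dt\Big]-\E\Big[\int_0^T \widetilde X(t)\,\hat\mu(t)\,dt\Big].
\end{align*}

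Next I would invoke concavity of $(x,y,z,u)\mapsto\mc H(t,\cdot,\hat p,\hat q,\hat r)$ to bound the Hamiltonian difference below by its gradient, giving $\hat{\mc H}(t)-\mc H^{\pi}(t)\ge \frac{\partial\hat{\mc H}}{\partial x}(t)\widetilde X(t)+\frac{\partial\hat{\mc H}}{\partial y}(t)\widetilde X(t-\delta)+\frac{\partial\hat{\mc H}}{\partial z}(t)\widetilde Z(t)+\frac{\partial\hat{\mc H}}{\partial u}(t)\widetilde\pi(t)$. The task is then to show that, after integration, the three state-gradient terms reproduce exactly $\E[\int_0^T\widetilde X(t)\hat\mu(t)\,dt]$ and hence cancel. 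The $y$-term is handled by the shift $t\mapsto t+\delta$ together with $\widetilde X\equiv0$ on $[-\delta,0]$, producing the indicator $\mathbf 1_{[0,T-\delta]}$ of \eqref{eq: 2.11}. The $z$-term is the main obstacle and the genuinely new ingredient: for each fixed $t$ I would apply the generalized duality formula (Proposition \ref{prop: 2.1-NY}) with $F=\frac{\partial\hat{\mc H}}{\partial z}(t)$ and $\varphi=\widetilde X\,\mathbf 1_{[t-\delta,t]}$ to rewrite $\E[\frac{\partial\hat{\mc H}}{\partial z}(t)\widetilde Z(t)]=\E[\int_{t-\delta}^t\E[D_s(\frac{\partial\hat{\mc H}}{\partial z}(t))\mid\mc F_s]\widetilde X(s)\,ds]$, and then interchange the $s$- and $t$-integrations by Fubini over the region $\{(s,t):0\le t\le T,\ (t-\delta)^+\le s\le t\}$; relabelling the variables yields precisely the Malliavin term $\int_t^{t+\delta}\E[D_t(\frac{\partial\hat{\mc H}}{\partial z}(s))\mid\mc F_t]\mathbf 1_{[0,T]}(s)\,ds$ appearing in $\hat\mu$. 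Care must be taken to justify both the duality formula and Fubini, but this follows from $\frac{\partial\hat{\mc H}}{\partial z}(\cdot)\in L^2(\lambda\times P)$ (estimate \eqref{est:adjointEquations}) and the moment bound \eqref{ineq:XmomentEstimate} on $\widetilde X$.

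After these cancellations only the control term survives, so $\Delta J\ge \E[\int_0^T\frac{\partial\hat{\mc H}}{\partial u}(t)(\hat\pi(t)-\pi(t))\,dt]$. Since $\pi,\hat\pi$ are $\mathbb G$-adapted, I would condition on $\mc G_t$ and pull the factor $\hat\pi(t)-\pi(t)$ outside, reducing the integrand to $\E[\frac{\partial\hat{\mc H}}{\partial u}(t)\mid\mc G_t](\hat\pi(t)-\pi(t))$. Applying hypothesis (ii) with the random, $\mc G_t$-measurable, $\mc V$-valued choice $v=\pi(t)$ — legitimate after a routine measurable-selection/approximation argument, since the stated inequality holds simultaneously for all deterministic $v\in\mc V$ — gives $\E[\frac{\partial\hat{\mc H}}{\partial u}(t)\mid\mc G_t](\hat\pi(t)-\pi(t))\ge0$, $dt\times P$-a.s. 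Hence $\Delta J\ge0$ for every admissible $\pi$, which is the asserted optimality of $\hat\pi$.
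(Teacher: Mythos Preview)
Your proposal is correct and follows essentially the same route as the paper: split $J(\hat\pi)-J(\pi)$ into terminal and running parts, use concavity of $g$ and of $\mc H$, apply the It\^o product formula to $\hat p\,(\hat X-X)$, cancel the $x$-, $y$-, and $z$-gradient terms against the driver $\hat\mu$ via the time shift and the generalized duality formula (Proposition~\ref{prop: 2.1-NY}), and conclude by conditioning on $\mc G_t$. The only step you gloss over that the paper makes explicit is the justification that the stochastic integrals arising from the It\^o product have zero expectation: the paper localizes with an increasing sequence of stopping times $\tau_n\uparrow T$ before passing to the limit via dominated convergence, whereas you assert directly that ``taking expectations kills the martingale parts''; this is routine to fill in given the moment bounds you already cite, but should be stated.
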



%
%
%
\begin{proof}
Fix  $\pi\in\mathcal{A}_{\mathbb{G}}$ with corresponding processes $X(t), b(t), \sigma(t), \gamma(t),$ $p(t), q(t), r(t)$. 

Write
\begin{align}
 \label{eq: 2.15}
J(\pi) - J(\hat{\pi}) = I_1 + I_2,
\end{align}
where
\begin{align}
 \label{eq: 2.16}
I_1 := \E[\int_0^T \Big( f(t,\mathbf{X}(t),\pi(t)) - f(t,\hat{\mathbf{X}}(t),\hat{\pi}(t)) \Big) dt]
\end{align}
and
\begin{align}
 \label{eq: 2.17}
I_2 := \E[g(X(T)) - g(\hat{X}(T))].
\end{align}
By the definition of $\mc{H}$ and  its concavity, 
we find that
\begin{align}
 \label{ineq:I_1}
I_1 &= \E\Big[ \int_0^T \Big\{\mc{H}(t,\mathbf{X}(t),\pi(t),\hat{p}(t),\hat{q}(t),\hat{r}(t, \cdot))-\mc{H}(t,\hat{\mathbf{X}}(t),\hat{\pi}(t),\hat{p}(t),\hat{q}(t),\hat{r}(t, \cdot))\nonumber\\
&\quad-\big(b(t,\mathbf{X}(t),\pi(t))-b(t,\hat{\mathbf{X}}(t),\hat{\pi}(t))\big)\hat{p}(t)\nonumber\\[\smallskipamount]
&\quad-\big(\sigma(t,\mathbf{X}(t),\pi(t))- \sigma(t,\hat{\mathbf{X}}(t),\hat{\pi}(t))\big)\hat{q}(t)\nonumber\\
&\quad-\int_{\mb{R}}\big(\gamma(t,\mathbf{X}(t),\pi(t), \zeta) - \gamma(t,\hat{\mathbf{X}}(t),\hat{\pi}(t), \zeta)\big)\hat{r}(t,\zeta)\nu(d \zeta)\Big\}dt\Big]\nonumber \\
&\leq \E\Big[\int_0^T\Big\{ \frac{\partial{\hat{\mc{H}}}}{\partial{x}}(t)\big(X(t) - \hat{X}(t)\big) + \frac{\partial{\hat{\mc{H}}}}{\partial{y}}(t)\big(Y(t) - \hat{Y}(t)\big)
+\frac{\partial{\hat{\mc{H}}}}{\partial{z}}(t)\big(Z(t) - \hat{Z}(t)\big)\nonumber \\
&\quad+ \frac{\partial{\hat{\mc{H}}}}{\partial u}(t)\big(\pi(t) - \hat{\pi}(t)\big) - \big(b(t) - \hat{b}(t)\big)\hat{p}(t) - \big(\sigma(t) - \hat{\sigma}(t)\big)\hat{q}(t) \nonumber\\
&\quad- \int_{\mb{R}}\big(\gamma(t,\zeta) - \hat{\gamma}(t,\zeta)\big)\hat{r}(t,\zeta)\nu(d \zeta)\Big\} dt\Big]
\end{align}
Since $g$ is concave and from the terminal condition of the adjoint equation, we have that
\begin{align}
 \label{eq: 2.19}
 I_2 &\leq \E[g'(\hat{X}(T))(X(T) - \hat{X}(T))] = \E[\hat{p}(T)(X(T) - \hat{X}(T))].
\end{align}
If we apply the It{\^o} formula to $\hat{p}(t)(X(t) - \hat{X}(t))$, we find that
\small\begin{align}
\hat p(T)(X(T)&-\hat X(T))=\int_0^T \E[-\hat\mu(t)|\mathcal F_t]\cdot\big(X(t)-\hat X(t)\big) +\hat p(t)\cdot \big(b(t)-\hat b(t)\big)\nonumber\\
&+\hat q(t)\cdot \big(\sigma(t)-\hat \sigma(t)\big)+ \int_{\mathbb{R}}\hat r(t,\zeta)\cdot \big(\gamma(t,\zeta)-\hat \gamma(t,\zeta)\big)\nu(d\zeta) dt\nonumber\\
&+\int_0^T\hat q(t)\cdot\big(X(t)-\hat X(t)\big)+ \hat p(t)\cdot \big(\sigma(t)-\hat \sigma(t)\big) dB(t)\label{eq:px_pxIto}\\
&+\int_0^T\int_{\mathbb{R}} \big[ \hat r(t,\zeta)\cdot\big(X(t)-\hat X(t)\big)\nonumber\\
&\quad\quad\quad\quad\quad\quad+ \big(\hat p(t)+\hat r(t,\zeta)\big)\cdot\big(\gamma(t,\zeta)-\hat\gamma(t,\zeta)\big) \big] \tilde N(dt,d\zeta).\nonumber
\end{align}\normalsize

Consider a suitable increasing sequence of stopping times $\tau_n$ defined by
\begin{align}
 \tau_n := T \wedge \inf \Big\{t > 0 :\mbox{ }  &\int_0^t \Big[ \Big( \hat q(s)\cdot\big(X(s)-\hat X(s)\big)+ \hat p(s)\cdot \big(\sigma(s)-\hat \sigma(s)\big) \Big)^2 \nonumber\\
&+  \int_{\mb{R}} \Big( \hat r(s,\zeta) \cdot \big( X(s)-\hat X(s)\big)
+ \big(\hat p(s)+\hat r(s,\zeta) \big) \nonumber\\
&\cdot \big( \gamma(s,\zeta)-\hat\gamma(s,\zeta) \big) \Big)^2 \nu(d\zeta)  \Big] ds\geq n\Big\}.
\end{align}\color{black}


\noindent It is easy to see that the sequence  $\{\tau_n \}_{n=1}^\infty$ converges to $T$. 
Now, since stochastic integrals with $L^2$-integrands have $0$ expectation, it follows that
\small\begin{align*}
\E[\hat p(\tau_n)(X(\tau_n)]&-\hat X(\tau_n)]=\E\Big[\int_0^{\tau_n} \E[-\hat\mu(t)|\mathcal F_t]\cdot\big(X(t)-\hat X(t)\big) +\hat p(t)\cdot \big(b(t)-\hat b(t)\big)\nonumber\\
&+\hat q(t)\cdot \big(\sigma(t)-\hat \sigma(t)\big)+ \int_{\mathbb{R}}\hat r(t,\zeta)\cdot \big(\gamma(t,\zeta)-\hat \gamma(t,\zeta)\big)\nu(d\zeta) dt.\Big]\nonumber
\end{align*}Note that the integrands are dominated by integrable processes, so we can pass to a limit. Combining this with \eqref{eq: 2.19}, we find that
\begin{align}\label{ineq:I_2}
I_2&\leq\E\Big[\int_0^T \E[-\hat\mu(t)|\mathcal F_t]\cdot\big(X(t)-\hat X(t)\big) +\hat p(t)\cdot \big(b(t)-\hat b(t)\big)\nonumber\\
&+\hat q(t)\cdot \big(\sigma(t)-\hat \sigma(t)\big)+ \int_{\mathbb{R}}\hat r(t,\zeta)\cdot \big(\gamma(t,\zeta)-\hat \gamma(t,\zeta)\big)\nu(d\zeta) dt\Big]. 
\end{align}
Finally, combining the estimates for $I_1$ and $I_2$ (\ref{ineq:I_1}, \ref{ineq:I_2}), we obtain
\small\begin{align}
J(\pi) - J(\hat{\pi}) &\leq \E\Big[\int_0^T \Big\{\frac{\partial{\hat{\mc{H}}}}{\partial{x}}(t)\cdot\big(X(t) - \hat{X}(t)\big) +  \frac{\partial{\hat{\mc{H}}}}{\partial{y}}(t)\cdot\big(Y(t) - \hat{Y}(t)\big) \nonumber\\
&+ \frac{\partial{\hat{\mc{H}}}}{\partial{z}}(t)\cdot\big(Z(t) - \hat{Z}(t)\big) + \frac{\partial{\hat{\mc{H}}}}{\partial u}(t)\cdot\big(\pi(t) - \hat{\pi}(t)\big) \\
&- \hat\mu(t)\cdot\big(X(t) - \hat{X}(t)\big)\Big\}dt\Big]\nonumber\\
&=\E\Big[\int_0^T \frac{\partial{\hat{\mc{H}}}}{\partial{y}}(t)\cdot\big(Y(t) - \hat{Y}(t)\big)dt\Big]\label{eq:Y1}\\
&\quad\quad- \E\Big[\int_0^T \frac{\partial{\hat{\mc{H}}}}{\partial{y}}(t + \delta)\cdot\big(X(t) - \hat{X}(t)\big)\boldsymbol{1}_{[0,T-\delta]}(t) dt\Big] \label{eq:Y2}\\
&+\E\Big[\int_0^T\frac{\partial{\hat{\mc{H}}}}{\partial{z}}(s)\cdot\big(Z(s) - \hat{Z}(s)\big)ds\Big] \label{eq:Z1}\\
&\quad\quad - \E[\int_0^T \int^{t + \delta}_t E[ D_t[\frac{\partial{\hat{\mc{H}}}}{\partial{z}}(s)] | \mc{F}_t] \boldsymbol{1}_{[0,T]}(s) (X(t) - \hat{X}(t)) ds dt\Big]\label{eq:Z2}\\
&+\E\Big[\int_0^T \frac{\partial{\hat{\mc{H}}}}{\partial u}(t)\cdot\big(\pi(t) - \hat{\pi}(t)\big)dt \Big]\nonumber\\
& =\E\Big[\int_0^T \frac{\partial{\hat{\mc{H}}}}{\partial u}(t)\cdot\big(\pi(t) - \hat{\pi}(t)\big)dt \Big].\label{eq:Jpi-Jpi}
\end{align}\normalsize
We will show  that the sum of the integrals (\ref{eq:Y1}-\ref{eq:Z2}) is in fact $0$.
Changing the order of integration and using the duality formula for Malliavin derivatives (Proposition~\ref{prop: 2.1-NY}), we get
\small\begin{align}\begin{split}
 \E\Big[\int_0^T& \frac{\partial{\hat{\mc{H}}}}{\partial{z}}(s)\cdot\big(Z(s) - \hat{Z}(s)\big)ds\Big] \\
&= \E\Big[\int_0^T \frac{\partial{\hat{\mc{H}}}}{\partial{z}}(s)\cdot \int_{s - \delta}^s \big(X(t) - \hat{X}(t)\big)dB(t) ds\Big] \\
&= \int_0^T \E\Big[\frac{\partial{\hat{\mc{H}}}}{\partial{z}}(s)\cdot \int_{s - \delta}^s \big(X(t) - \hat{X}(t)\big)dB(t)\Big] ds  \label{eq: 2.21}\\
&= \int_0^T \E[\int_{s - \delta}^s \E[ D_t(\frac{\partial{\hat{\mc{H}}}}{\partial{z}}(s)) | \mc{F}_t] \cdot\big(X(t) - \hat{X}(t)\big) dt] ds\\
&=  \E[\int_0^T \int^{t + \delta}_t \E[D_t(\frac{\partial{\hat{\mc{H}}}}{\partial{z}}(s)) | \mc{F}_t] \boldsymbol{1}_{[0,T]}(s) (X(t) - \hat{X}(t)) ds dt\Big]. 
\end{split}\end{align}\normalsize
Also, note that
\small\begin{align}
\E\Big[\int_0^T& \frac{\partial{\hat{\mc{H}}}}{\partial{y}}(t)\cdot\big(Y(t) - \hat{Y}(t)\big)dt\Big] \nonumber\\
&= \E\Big[\int_0^T \frac{\partial{\hat{\mc{H}}}}{\partial{y}}(t)\cdot\big(X(t - \delta) - \hat{X}(t - \delta)\big)dt\Big] \label{eq: 2.22}\\
&= \E\Big[\int_0^T \frac{\partial{\hat{\mc{H}}}}{\partial{y}}(t + \delta)\cdot\big(X(t) - \hat{X}(t)\big)\boldsymbol{1}_{[0,T-\delta]}(t) dt]\nonumber
\end{align}\normalsize
Now continuing where we left off from (\ref{eq:Jpi-Jpi}), we find that
\small\begin{align}
J(\pi)-J(\hat{\pi}) &\leq \E\Big[\int_0^T \frac{\partial{\hat{\mc{H}}}}{\partial u}(t)\cdot\big(\pi(t) - \hat{\pi}(t)\big)dt] \\[\smallskipamount]
&= \E[\int_0^T \E[\frac{\partial{\hat{\mc{H}}}}{\partial u}(t) | \mathcal{G}_t](\pi(t) - \hat{\pi}(t)) dt] \leq 0
\end{align}\normalsize
by (\ref{eq: 2.14}).
Hence, $\hat{\pi}$ is optimal.
\end{proof}

\section{A necessary maximum principle}
\label{sec: NMP}

Here we develop a Gateaux-type (or directional) differentiability property for the performance functional $J$ (Lemma 4.4, Theorem 4.5). The differentiability of $J$ is obtained under suitable regularity hypotheses on the
coefficients of the SDE with noisy memory, the performance functional and the set of admissible controls. See  Assumption \ref{assumption:NMP} below. The directional derivative of the performance functional yields
a necessary condition for the optimal control problem in terms of the Hamiltonian.

In the subsequent discussion, we will use the same notation $|\cdot|$ to denote any norm on $\mathbb R^n$, because such norms are all equivalent.


We impose the following set of assumptions throughout this section:
\begin{assumption}
\label{assumption:NMP}
\begin{enumerate}[i)]$ $
\item \label{assumption:derivativesBoundedAndLipshitz_equation}
The functions   $\nabla b$ and $ \nabla\sigma$  are bounded. The upper bound is denoted by $D_0$.
Also, there exists a non-negative function $D\in L^2(\nu)$ such that
\begin{align*}
 |\nabla\gamma(t,x,y,z,u,\omega, \zeta)|\leq D(\zeta)
\end{align*}
%
%
%
%
%

%
%
\item \label{assumption:derivativesBoundedAndLipshitz_pFunctional}
The functions $\nabla f$  and $  \nabla g$ are dominated by some \begin{align*}D_1(\cdot)\in L^2(\Omega\times[0,T]),&\quad\textnormal{and }D_2\in L^2(\Omega),\end{align*}
respectively.%
%

%
%
\end{enumerate}

\end{assumption}

\medskip

Let $\pi, \eta\in \mathcal{A}_{\mathbb G}$ and suppose $\eta$ is bounded. Consider the stochastic differential equation
\begin{align}
  d K(t)&= (K(t),  K(t-\delta), \int_{t-\delta}^t  K(r) dB(r),\eta(t))\cdot\Big[\nabla b(t,\mathbf X(t), \pi(t))\, dt \nonumber\\
& +\nabla \sigma(t,\mathbf X(t), \pi(t))\, dB(t)
+\int_{\mathbb{R}}\nabla\gamma (t,\mathbf X(t), \pi(t),\zeta) \tilde N(dt,d\zeta)\Big]^\mathsf{T} \label{eq:derivativeProcess}\\
 K(t)&=0,\quad t\in[-\delta,0].\nonumber
\end{align}We remark, that we regard the gradients as row vectors, and $\cdot$ as matrix multiplication.
\begin{lemma}\label{lemma:existenceAndUniqueness_K}

The equation
(\ref{eq:derivativeProcess}  ) has a unique c\`{a}dl\`{a}g solution \\$K=K^{\pi,\eta}\in L^2(\Omega\times[-\delta,T])$,  with
\begin{align}\label{ineq:ZmomentEstimate}
  \E[\sup_{t\in[-\delta,T]}|K(t)|^2]<\infty.
 \end{align}
\end{lemma}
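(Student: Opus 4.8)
The plan is to solve (\ref{eq:derivativeProcess}) by Picard iteration in the Banach space $\mathbf{S}$ of c\`adl\`ag adapted processes $K$ on $[-\delta,T]$ that vanish on $[-\delta,0]$, equipped with $\|K\|^2:=\E[\sup_{t\in[-\delta,T]}|K(t)|^2]$. First I would recast the equation in integral form. Setting $V_K(t):=\big(K(t),K(t-\delta),\int_{t-\delta}^t K(r)\,dB(r),\eta(t)\big)$ and writing $\nabla b(u):=\nabla b(u,\mathbf X(u),\pi(u))$ and similarly for $\nabla\sigma,\nabla\gamma$, equation (\ref{eq:derivativeProcess}) reads
\begin{align*}
K(t)=\int_0^t V_K(u)\cdot\nabla b(u)^{\mathsf T}\,du+\int_0^t V_K(u)\cdot\nabla\sigma(u)^{\mathsf T}\,dB(u)+\int_0^t\!\!\int_{\mb R} V_K(u)\cdot\nabla\gamma(u,\zeta)^{\mathsf T}\,\tilde N(du,d\zeta).
\end{align*}
Defining $K_0\equiv0$ and letting $K_{n+1}$ be the right-hand side evaluated at $K_n$, I would first check by induction that each $K_n\in\mathbf{S}$: since $\nabla b,\nabla\sigma$ are bounded by $D_0$, $|\nabla\gamma|\le D(\cdot)\in L^2(\nu)$ (Assumption~\ref{assumption:NMP}~\ref{assumption:derivativesBoundedAndLipshitz_equation})) and $\eta$ is bounded, the Burkholder--Davis--Gundy (BDG) inequality together with the It\^o isometry controls all three integrals.

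The key estimate is the contraction bound. Setting $\Delta_n:=K_{n+1}-K_n$, the difference is driven by $V_{\Delta_{n-1}}(u)=\big(\Delta_{n-1}(u),\Delta_{n-1}(u-\delta),\int_{u-\delta}^u\Delta_{n-1}(r)\,dB(r),0\big)$. Applying BDG to the martingale parts and Cauchy--Schwarz to the drift, and using the coefficient bounds, I obtain $\E[\sup_{s\le t}|\Delta_n(s)|^2]\le C\,\E[\int_0^t|V_{\Delta_{n-1}}(u)|^2\,du]$ with $C=C(T,D_0,\|D\|_{L^2(\nu)})$. The delay term contributes $\E[\int_0^t\Delta_{n-1}(u-\delta)^2\,du]\le\E[\int_0^t\Delta_{n-1}(u)^2\,du]$, since $\Delta_{n-1}\equiv0$ on $[-\delta,0]$. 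The decisive term is the noisy memory one: by the It\^o isometry,
\begin{align*}
\E\Big[\Big(\int_{u-\delta}^u\Delta_{n-1}(r)\,dB(r)\Big)^2\Big]=\E\Big[\int_{u-\delta}^u\Delta_{n-1}(r)^2\,dr\Big]\le\E\Big[\int_0^u\Delta_{n-1}(r)^2\,dr\Big],
\end{align*}
so integrating in $u$ costs only one extra time integration. Writing $\psi_n(t):=\E[\sup_{s\le t}|\Delta_n(s)|^2]$, all three contributions are dominated by $\int_0^t\psi_{n-1}(u)\,du$ up to constants, giving $\psi_n(t)\le C'\int_0^t\psi_{n-1}(u)\,du$.

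Iterating this inequality yields $\psi_n(T)\le (C'T)^n\psi_0(T)/n!$, where $\psi_0(T)=\|K_1\|^2<\infty$ by the $\mathbf{S}$-bound on $K_1$. Hence $\sum_n\|\Delta_n\|<\infty$, so $\{K_n\}$ is Cauchy in $\mathbf{S}$ and converges to a limit $K\in\mathbf{S}$; passing to the limit in the integral equation (the stochastic integrals converge by It\^o isometry and BDG) shows that $K$ solves (\ref{eq:derivativeProcess}), and (\ref{ineq:ZmomentEstimate}) is precisely $K\in\mathbf{S}$. Uniqueness follows from the same computation: the difference $D$ of two solutions satisfies $\E[\sup_{s\le t}|D(s)|^2]\le C'\int_0^t\E[\sup_{r\le u}|D(r)|^2]\,du$, and Gronwall's lemma forces $D\equiv0$.

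I expect the main obstacle to be the noisy memory term $\int_{t-\delta}^t K(r)\,dB(r)$: unlike an ordinary delay, it is itself an It\^o integral of the unknown, so the coefficients of (\ref{eq:derivativeProcess}) are not Lipschitz in the pointwise current state and the textbook SDE theory does not apply verbatim. The resolution is exactly the It\^o-isometry step above, which converts the $L^2$-size of the memory term into a time integral of $\E[\int_0^\cdot K^2]$; this costs one extra integration but leaves the factorial decay intact, so the Picard scheme still contracts. A secondary technical point is to verify that $u\mapsto\int_{u-\delta}^u K(r)\,dB(r)$ is a legitimate (predictable, square-integrable) integrand for the outer $dB$- and $\tilde N$-integrals, which is where the c\`adl\`ag adaptedness and the moment bound enter. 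Alternatively, one may bypass the iteration entirely by invoking the stochastic functional differential equation framework of \cite{BCDDR} cited after equation (2.1), under which (\ref{eq:derivativeProcess}) is a linear SFDE with bounded coefficients.
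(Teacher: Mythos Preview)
Your argument is correct. The Picard scheme you set up works, and the key step---converting the noisy-memory term $\int_{u-\delta}^u \Delta_{n-1}(r)\,dB(r)$ into a time integral of $\E[\Delta_{n-1}^2]$ via the It\^o isometry---is exactly what keeps the iteration contractive despite the path-dependence. One small remark: for the $\tilde N$-integral you invoke BDG generically, but the precise tool here is Kunita's inequality for compensated Poisson integrals (the paper cites \cite{MR2090755}, Corollary 2.12), which gives the $\sup$-control you need in terms of $\int_0^t\int_{\mb R}|V_{\Delta_{n-1}}(u)\cdot\nabla\gamma(u,\zeta)^{\mathsf T}|^2\,\nu(d\zeta)\,du$; this is dominated by $\|D\|_{L^2(\nu)}^2\int_0^t|V_{\Delta_{n-1}}(u)|^2\,du$ just as you say.

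The paper, by contrast, does not write out the iteration at all. Its proof of this lemma is a two-line remark: regard (\ref{eq:derivativeProcess}) as a stochastic functional differential equation in the sense of \cite{BCDDR}, proceed as in Mohammed \cite{MR754561}, and handle the jump part with Kunita's inequality, noting that the boundedness of $\nabla b,\nabla\sigma,\nabla\gamma$ from Assumption~\ref{assumption:NMP} is what makes the (linear) coefficients admissible. In other words, the paper's proof is precisely the ``alternatively'' sentence you put at the end. What your explicit Picard argument buys is self-containment and a transparent display of where each hypothesis enters (the $D_0$ and $D(\zeta)$ bounds, the boundedness of $\eta$, the zero initial segment); what the paper's citation buys is brevity. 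There is no substantive mathematical difference: the SFDE existence theory in \cite{MR754561,BCDDR} is itself proved by exactly the kind of Picard--Gronwall argument you carry out.
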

The proof of the above lemma is straightforward, considering the equation (4.1) as a stochastic functional differential equation as in \cite{BCDDR}. The approach is similar to the one in  \cite{MR754561}, with the addition of applying Kunita's inequality for $\tilde N$-integrals (\cite{MR2090755}, Corollary 2.12). We remark  that the boundedness conditions on $\nabla b$, $\nabla \sigma$ and $\nabla \gamma$  are used in the proof.

To simplify the exposition in the rest of the section, we will adopt the following notation:
\small\begin{align}\mathbf K(t):&=\mathbf K^{\pi,\eta}(t):=\Big(K^{\pi,\eta}(t),  K^{\pi,\eta}(t-\delta), \int_{t-\delta}^t  K^{\pi,\eta}(s) \, dB_s\Big),&\textnormal{and}\nonumber\\
(\mathbf K(t), \eta(t)):&=(\mathbf K^{\pi,\eta}(t), \eta(t)):=\Big(K^{\pi,\eta}(t),  K^{\pi,\eta}(t-\delta), \int_{t-\delta}^t  K^{\pi,\eta}(s) \, dB_s, \eta(t)\Big)\label{not:K_eta},
\end{align}\normalsize
for $0 \leq t \leq T$.

\subsection{Directional differentiability of the performance functional}
Suppose now that $\pi,\eta\in \mathcal A_{\mathbb G}$. Also assume that there exist an interval $ I\subset \mathbb R$ containing $0$ such that the perturbations $\pi+s\eta$ is in $\mathcal A_{\mathbb G}$ for each $s\in  I$. The following lemmas give continuity and differentiability results for the function $$s\mapsto X^{\pi+s\eta}.$$
We begin by defining the random fields
\small\begin{align*}
 F_s(t):&=F^{\pi,\eta}_s(t):=X^{\pi+s\eta}(t)-X^{\pi}(t), \\
\mathbf F_s(t):&=\mathbf F^{\pi,\eta}(t):=\mathbf X^{\pi+s\eta}(t)-\mathbf X^{\pi}(t)=\Big(F^{\pi,\eta}_s(t), F^{\pi,\eta}_s(t-\delta), \int_{t-\delta}^t F^{\pi,\eta}_s(r)dB(r)\Big).
\end{align*}\normalsize
\begin{lemma}
\label{lem:F}There exists constants $C>0$, independent of $\pi,\eta$ such that
\begin{align}\label{ineq:continuitylemma}\E\Big[\sup_{0 \leq v\leq t}\Big|\mathbf F_s(v)|^2\Big]\leq C\parallel \eta\parallel^2_{L^2(\Omega\times[0,T])}s^2.
\end{align} Moreover there is measurable version of the random field $(\omega,t,s)\mapsto \mathbf F_s(t,\omega)$ such that for a.e. $\omega$,  $\mathbf F_s(t,\omega)\rightarrow 0$ as $s \rightarrow 0$ for each $t$.
\end{lemma}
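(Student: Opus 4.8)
The plan is to realize $F_s$ as the solution of the SDE obtained by subtracting the two copies of \eqref{eq:NoisyMemoryMainX} corresponding to the controls $\pi+s\eta$ and $\pi$, and then to run the standard a priori $L^2$-estimates for such equations. Subtracting the two state equations, $F_s\equiv 0$ on $[-\delta,0]$ and, for $t\in[0,T]$,
\begin{align*}
dF_s(t)=\Delta b(t)\,dt+\Delta\sigma(t)\,dB(t)+\int_{\mb{R}}\Delta\gamma(t,\zeta)\,\tilde N(dt,d\zeta),
\end{align*}
where $\Delta b(t):=b(t,\mathbf X^{\pi+s\eta}(t),\pi(t)+s\eta(t))-b(t,\mathbf X^{\pi}(t),\pi(t))$, and similarly for $\Delta\sigma,\Delta\gamma$. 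By Assumption \ref{assumption:NMP} \ref{assumption:derivativesBoundedAndLipshitz_equation}) the gradients $\nabla b,\nabla\sigma$ are bounded by $D_0$ and $|\nabla\gamma|\le D(\zeta)$ with $D\in L^2(\nu)$, so $b,\sigma,\gamma$ are globally Lipschitz in $(x,y,z,u)$ uniformly in $(t,\omega)$; hence pointwise
\begin{align*}
|\Delta b(t)|^2+|\Delta\sigma(t)|^2\le C\big(|\mathbf F_s(t)|^2+s^2|\eta(t)|^2\big),
\end{align*}
and $|\Delta\gamma(t,\zeta)|\le D(\zeta)\big(|\mathbf F_s(t)|+s|\eta(t)|\big)$.

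Next I would square, take suprema over $[0,t]$ and expectations. For the drift part I apply Cauchy--Schwarz in time; for the Brownian part, Doob's maximal inequality together with the It\^o isometry; for the jump part, Kunita's inequality (as cited for Lemma \ref{lemma:existenceAndUniqueness_K}). This yields, with $C$ depending only on $T,D_0,\|D\|_{L^2(\nu)}$,
\begin{align*}
\E\Big[\sup_{v\le t}|F_s(v)|^2\Big]\le C\int_0^t\E\big[|\mathbf F_s(r)|^2\big]\,dr+Cs^2\,\E\Big[\int_0^T|\eta(r)|^2\,dr\Big].
\end{align*}
The point is then to bound $\E[|\mathbf F_s(r)|^2]$ by $\E[\sup_{v\le r}|F_s(v)|^2]$. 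The first component is immediate; for the delay component I use $F_s\equiv0$ on $[-\delta,0]$, so $\sup_{v\le r}|F_s(v-\delta)|^2\le\sup_{v\le r}|F_s(v)|^2$; for the noisy-memory component I write $\int_{v-\delta}^vF_s\,dB=M(v)-M((v-\delta)^+)$ with $M(v):=\int_0^vF_s\,dB$, so that by Doob and the It\^o isometry $\E[\sup_{v\le r}|\int_{v-\delta}^vF_s\,dB|^2]\le C\int_0^r\E[|F_s(v)|^2]\,dv$. Setting $G(t):=\E[\sup_{v\le t}|F_s(v)|^2]$, these combine into $G(t)\le C\int_0^tG(r)\,dr+Cs^2\|\eta\|_{L^2(\Omega\times[0,T])}^2$, and Gronwall's inequality, followed by re-expressing the $\mathbf F_s$-norm via the same three bounds, gives \eqref{ineq:continuitylemma}.

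For the measurable version and the convergence, I would note that the entire estimate applies verbatim to the difference $X^{\pi+s\eta}-X^{\pi+s'\eta}$, whose control perturbation is $(s-s')\eta$; this gives $\E[\sup_{v\le T}|\mathbf F_s(v)-\mathbf F_{s'}(v)|^2]\le C\|\eta\|_{L^2(\Omega\times[0,T])}^2|s-s'|^2$, i.e. $s\mapsto\mathbf F_s$ is Lipschitz from $I$ into $L^2$ of the càdlàg path space under the sup-norm. Applying the Kolmogorov--Chentsov continuity theorem to this one-parameter ($s$) family (here the exponents are $\alpha=2$, $1+\beta=2$) produces a jointly measurable version of $(\omega,t,s)\mapsto\mathbf F_s(t,\omega)$ that is continuous in $s$, in sup-norm, for a.e.\ $\omega$; since $\mathbf F_0\equiv0$, continuity at $s=0$ yields $\mathbf F_s(t,\omega)\to0$ as $s\to0$ for every $t$ and a.e.\ $\omega$.

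The main obstacle is the circular dependence introduced by the noisy memory: the estimate for $F_s$ involves the difference of the noisy-memory terms $Z^{\pi+s\eta}-Z^{\pi}=\int_{\cdot-\delta}^{\cdot}F_s\,dB$, which is itself a stochastic integral of the unknown $F_s$. The resolution is the Doob/It\^o-isometry bound above, which converts this term into an ordinary time-integral of $G$ and so keeps the argument within the scope of Gronwall's inequality; care is also needed to ensure the constant $C$ is genuinely independent of $\pi,\eta$, which is guaranteed by the uniform-in-$(t,u,\omega)$ Lipschitz and boundedness hypotheses of Assumption \ref{assumption:NMP} \ref{assumption:derivativesBoundedAndLipshitz_equation}).
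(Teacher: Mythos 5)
Your proposal is correct and follows essentially the same route as the paper: Lipschitz bounds derived from the gradient bounds in Assumption \ref{assumption:NMP}, maximal/Kunita-type inequalities to convert the noisy-memory stochastic integral $\int_{v-\delta}^v F_s\,dB$ into a time integral of $\E[\sup|F_s|^2]$, Gronwall, and then the Lipschitz-in-$s$ estimate combined with a Kolmogorov-type continuity theorem (the paper invokes the Kolmogorov--Totoki version for path-space-valued fields) to obtain the measurable version and the convergence as $s\to 0$. The only difference is that you treat all three coefficients while the paper restricts to the jump term ``for simplicity,'' which does not change the argument.
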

\begin{proof}For simplicity, we consider the case where $b,\sigma=0$.
Define
\begin{align}
\beta_s(t):= \E\Big[\sup_{-\delta \leq v\leq t}\Big| F_s(v)|^2\Big].
\end{align}
Observe that by Kunita's inequality, it follows that 
\small\begin{align}
\begin{split}\label{ineq:boldBexpectation}
 \E[\sup_{0\leq v\leq t}&|\mathbf F_s(t)|^2]= \E\Big[\sup_{0\leq v\leq t}\Big\{|F_s(v)|^2+|F_s(v-\delta)|^2+\Big|\int_{v-\delta}^v F_s(r) dB(r)\Big|^2\Big\}\Big]\\
&\leq 2\beta_s(t)+\E\Big[\sup_{0\leq v\leq t}\Big|\int_{v-\delta}^v F_s(r) dB(r)\Big|^2\Big]\\
&\leq 2\beta_s(t)+C_{2,T}\int_{v-\delta}^v|F_s(r)|^2 dr\Big]\\
&\leq(2+\delta C_{2,T})\beta_s(t)
\end{split}
\end{align}\normalsize
Notice that since $\nabla \gamma$ is dominated by $D$, $\gamma$ is Lipschitz in all spacial variables, with Lipschitz constant $D(\zeta)$.
From the integral representation of $X$, It\^o's isometry, and finally the Lipschitz condition on $\gamma$ we find that
\small\begin{align*}
 \beta_s(t)&\leq \int_0^t\E\Big[\int_{\mathbb R}|\gamma(v,\mathbf X^{\pi+s\eta},\pi+s\eta)-\gamma(v,\mathbf X^{\pi},\pi)   |^2\nu(d\zeta)\Big]dv\\
&\leq \int_0^t\E\Big[\int_{\mathbb R}D(\zeta)^2|(\mathbf F_s(t), s\eta(t))|^2 \nu(d\zeta)\Big]dv\\
&\leq\parallel D\parallel^2_{L^2(\nu)}\int_0^t (2+\delta)\beta_s(v)\,dv +s^2\parallel\eta\parallel^2_{L^2(\Omega\times[0,T])}.
\end{align*}\normalsize
Hence by Gronwall's lemma there is a constant $C'>0$ such that
\begin{align}\label{est:beta_2}
 \beta_s(t)\leq C' s^2\parallel\eta\parallel^2_{L^2(\Omega\times[0,T])}.
\end{align}Combining this with the estimate (\ref{ineq:boldBexpectation}) yields the first part of the lemma. 

Now, using the first part of the lemma, and  an estimate similar to (\ref{ineq:boldBexpectation}), we find that for each $s_1, s_2\in I$
\begin{align*}
 \E[&\sup_{0\leq t\leq T}|\mathbf F^{\pi,\eta}_{s_1}(t)-\mathbf F^{\pi,\eta}_{s_2}(t)|^2]= \E[\sup_{0 \leq t\leq T}|\mathbf X^{\pi+s_1\eta}(t)-\mathbf X^{(\pi+s_1\eta)+(s_2-s_1)\eta}(t)|^2]\\
& \E[\sup_{0 \leq t\leq T}|\mathbf F^{\pi+s_1\eta,\eta}_{s_2-s_1}(t)|^2] \leq C|s_1-s_2|^2\parallel\eta\parallel^2_{L^2(\Omega\times[0,T])}.
\end{align*}Let $\mathcal D$ be the space of c\`adl\`ag paths from $[0,T]$ to $\mathbb R^3$ equipped with the uniform topology.
Then by the Kolmogorov-Totoki theorem (see e.g. \cite{MR2090755, Totoki}), it holds that the random field
\begin{align}
 I\times \Omega\ni(s,\omega)\mapsto F^{\pi,\eta}_s(\cdot,\omega)\in\mathcal D[0,T],
\end{align}  has a continuous version. Thus there is a version of $\mathbf F^{\pi,\eta}$ such that $(\omega,s,t)\mapsto F^{\pi,\eta}_s(t,\omega)$ is jointly measurable, c\`adl\`ag in $t$ and continuous in $s$. In particular for a.e. $\omega$ it holds that $\mathbb F^{\pi,\eta}_s(t,\omega)\rightarrow 0$ for every $t$, as $s\rightarrow 0$.
\end{proof}

Next, we define the random fields
\small\begin{align*}
 A_s(t):&=\frac{X^{\pi+s\eta}(t)-X^{\pi}(t)}{s}-K^{\pi,\eta}(t),  \quad -\delta \leq t \leq T;\\
 \mathbf A_s(t):&=\frac{\mathbf X^{\pi+s\eta}(t)-\mathbf X^{\pi}(t)}{s}-\mathbf K^{\pi,\eta}(t)=\Big(A_s(t), A_s(t-\delta), \int_{t-\delta}^t A_s(r)dB(r)\Big), 0 \leq t \leq T.
\end{align*}\normalsize

\begin{lemma}\label{lem:A}Suppose that $\pi,\eta\in\mathcal A_{}$. Then
 \begin{align}\label{lim:alpha}\E\Big[\sup_{0\leq v\leq t}\Big|\mathbf A_s(v)\Big|^2\Big]\rightarrow 0\end{align}
as $s\rightarrow 0$.
\end{lemma}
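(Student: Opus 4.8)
The plan is to show that the rescaled difference quotient $A_s$ converges to zero in the strong $L^2$ sense by deriving a Gronwall-type estimate for the quantity $\alpha_s(t):=\E[\sup_{-\delta\leq v\leq t}|A_s(v)|^2]$, exactly parallel to the argument used for $\beta_s$ in Lemma~\ref{lem:F}. First I would write down the integral equation satisfied by $A_s$. Since $K$ solves the linearized equation (\ref{eq:derivativeProcess}) with coefficients $\nabla\gamma(t,\mathbf X(t),\pi(t),\zeta)$ evaluated along the unperturbed trajectory, while $\frac{1}{s}(X^{\pi+s\eta}-X^\pi)$ solves the corresponding difference equation with the finite increments of $\gamma$, the process $A_s$ satisfies an SDE whose driving term is the discrepancy between the true increment of $\gamma$ and its linearization. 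The natural device is a first-order Taylor expansion with integral remainder: for each fixed $\zeta$,
\begin{align*}
\gamma(t,\mathbf X^{\pi+s\eta}(t),\pi(t)+s\eta(t),\zeta)-\gamma(t,\mathbf X^{\pi}(t),\pi(t),\zeta)
=\int_0^1\nabla\gamma\big(t,\mathbf X^\pi(t)+\theta\,s\,\mathbf G_s(t),\,\pi(t)+\theta s\eta(t),\zeta\big)\cdot(s\mathbf F_s(t)/s,\eta(t))\,d\theta,
\end{align*}
where $\mathbf G_s(t):=\mathbf F_s(t)/s$ denotes the scaled displacement. Dividing by $s$ and subtracting $\nabla\gamma(t,\mathbf X^\pi(t),\pi(t),\zeta)\cdot(\mathbf K(t),\eta(t))$ produces a leading linear term in $\mathbf A_s$ plus a remainder measuring how far the averaged gradient along the path is from the gradient at the base point.

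Next I would estimate $\alpha_s(t)$ using It\^o's isometry and Kunita's inequality, together with the auxiliary bound (\ref{ineq:boldBexpectation}) that controls $\E[\sup|\mathbf A_s|^2]$ by $(2+\delta C_{2,T})\alpha_s(t)$. The boundedness of $\nabla\gamma$ by $D\in L^2(\nu)$ (Assumption~\ref{assumption:NMP}\ref{assumption:derivativesBoundedAndLipshitz_equation}) makes the linear term contribute $\|D\|^2_{L^2(\nu)}\int_0^t(2+\delta)\alpha_s(v)\,dv$, which is precisely the structure needed for Gronwall. The remainder term takes the form
\begin{align*}
R_s(t):=\int_0^t\E\Big[\int_{\mathbb R}\Big|\int_0^1\big[\nabla\gamma(v,\mathbf X^\pi+\theta s\mathbf G_s,\pi+\theta s\eta,\zeta)-\nabla\gamma(v,\mathbf X^\pi,\pi,\zeta)\big]\,d\theta\cdot(\mathbf G_s(v),\eta(v))\Big|^2\nu(d\zeta)\Big]dv,
\end{align*}
and I would show $R_s(t)\to 0$ as $s\to 0$. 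By Gronwall this gives $\alpha_s(t)\leq e^{C\|D\|^2 t}\,R_s(t)$, whence $\alpha_s(T)\to 0$, which combined with (\ref{ineq:boldBexpectation}) yields (\ref{lim:alpha}).

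The main obstacle is the convergence $R_s(t)\to 0$, since the factor $(\mathbf G_s(v),\eta(v))$ does \emph{not} vanish as $s\to 0$ — indeed $\mathbf G_s\to\mathbf K$ in $L^2$ by Lemma~\ref{lem:F}, so it stays of order one — and all the smallness must come from the difference of gradients. Here the continuity of the measurable version of $\mathbf F_s$ from Lemma~\ref{lem:F} is essential: for a.e.\ $\omega$ and each fixed $t$, $s\mathbf G_s(t,\omega)=\mathbf F_s(t,\omega)\to 0$, so by the continuity of $\nabla\gamma$ (Assumption~\ref{assumption: E}\ref{hyp:C1}) the bracketed gradient difference tends to $0$ pointwise in $(\omega,t,\theta,\zeta)$. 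The delicate part is justifying passage to the limit under the expectation and the $\nu(d\zeta)$-, $d\theta$- and $dv$-integrals: I would dominate the integrand using $|\nabla\gamma|\leq D(\zeta)$ to bound the gradient difference by $2D(\zeta)$, control $\E[\sup|\mathbf G_s|^2]$ uniformly in $s$ via (\ref{ineq:continuitylemma}) (which gives a bound independent of $s$ after dividing by $s^2$), and then invoke dominated convergence, being careful to handle the product of the (bounded in $L^2$ but not convergent in sup-norm) factor $\mathbf G_s$ with the gradient difference that converges only in a weaker sense — a uniform integrability argument, or a splitting into the region where $s\mathbf G_s$ is small and its complement, closes this gap.
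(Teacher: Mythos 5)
Your overall strategy -- the Gronwall estimate for $\alpha_s(t)=\E[\sup_{-\delta\leq v\leq t}|A_s(v)|^2]$, the Taylor expansion with integral remainder, the use of \eqref{ineq:boldBexpectation} and of the pointwise convergence $\mathbf F_s\to 0$ from Lemma \ref{lem:F} -- is exactly the paper's. The difference, and the problem, lies in how you split the Taylor identity. You subtract $\nabla\gamma(\text{base})\cdot(\mathbf K,\eta)$ so that the Gronwall term carries the \emph{base-point} gradient against $\mathbf A_s$, and the remainder $R_s$ carries the gradient increment against the $s$-dependent factor $(\mathbf G_s,\eta)$ with $\mathbf G_s=\mathbf F_s/s$. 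As you yourself note, $\mathbf G_s$ does not vanish and is only known to be \emph{bounded} in $L^2$ uniformly in $s$ (via \eqref{ineq:continuitylemma} divided by $s^2$). A product $\E[\int f_s\,g_s]$ with $f_s\to 0$ pointwise, $|f_s|\leq 2D(\zeta)^2$, and $g_s=|\mathbf G_s|^2$ merely bounded in $L^1$ need not tend to zero: the mass of $g_s$ can concentrate where $f_s$ is not yet small. Uniform integrability of $|\mathbf G_s|^2$ would save you, but $\mathbf G_s=\mathbf K+\mathbf A_s$, so establishing it amounts to controlling $\mathbf A_s$ -- i.e.\ it is essentially the conclusion of the lemma, and your argument becomes circular. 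The same objection applies to your proposed splitting of the domain into $\{|s\mathbf G_s|\ \text{small}\}$ and its complement: on the complement you must bound $\E[|\mathbf G_s|^2\mathbf 1_{\{|\mathbf F_s|>\epsilon\}}]$, which again requires uniform integrability. So the step you flag as ``delicate'' is a genuine gap as written.

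The fix is a one-line rearrangement, and it is what the paper does: inside the Taylor remainder write $\tfrac{1}{s}\mathbf F_s=\mathbf A_s+\mathbf K$, so that the \emph{averaged, perturbed} gradient multiplies $\mathbf A_s$ (still bounded by $D(\zeta)$, hence still a legitimate Gronwall term contributing $2\|D\|^2_{L^2(\nu)}(2+\delta)\alpha_s(v)$), while the gradient increment $\nabla_{x,y,z}\gamma(v,\mathbf X^\pi+\lambda\mathbf F_s,\pi+s\eta,\zeta)-\nabla_{x,y,z}\gamma(v,\mathbf X^\pi,\pi,\zeta)$ multiplies the fixed, $s$-independent process $\mathbf K$. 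The resulting integrand is dominated by $2D(\zeta)^2|\mathbf K(v)|^2$, which is integrable and independent of $s$, so dominated convergence applies with no uniform-integrability issue. (The paper also treats the $u$-increment separately as $I_{s,2}$; there the multiplying factor is the fixed bounded $\eta$, so your version of that piece is already fine.) With that single change of decomposition your proof closes and coincides with the paper's.
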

\begin{proof}
Define
\begin{align}
 \alpha_s(t):=
\E[\sup_{-\delta\leq v\leq t} |A_s(v)|^2]
\end{align}
Similarly as in the previous proof, we have
\small\begin{align}
\label{ineq:boldAexpectation}\begin{split}
 \E[\sup_{0\leq v\leq t}|\mathbf A_s(v)|^2]
\leq(2+C_{2,T}\delta)\alpha_s(t)\end{split}
\end{align}\normalsize

We remark that in order to use Taylor's formula for the $u$-variable, when $\mathcal{U}$ is not open, we need to assume that $b,\sigma,\gamma$ have $C^1$-extensions that are defined on an open set containing  $\mathcal{U}$. In extending the results to controls in e.g. $\mathbb R^n$, one needs to take extra care.

From the integral representation of $X$ and $K$, and by adding and subtracting a term, we find that
\small
\begin{align}
 A_s(t)&=\int_0^t\int_{\mathbb R_0} \frac{1}{s}\Big\{\gamma(v,\mathbf X^{\pi+s\eta},\pi+s\eta,\zeta)-\gamma(v,\mathbf{X}^{\pi},\pi,\zeta)\Big\}\nonumber\\
&\quad-\nabla \gamma(v,\mathbf X^{\pi},\pi,\zeta)\cdot(\mathbf K^{\pi,\eta}(v),\eta(v))^{\mathsf{T}}\tilde N(de,dv)\nonumber\\
\begin{split}\label{line:nablaGamma1}&=\int_0^t\int_{\mathbb R_0} \frac{1}{s}\Big\{\gamma(v,\mathbf X^{\pi+s\eta},\pi+s\eta,\zeta)-\gamma(v,\mathbf{X}^{\pi},\pi+s\eta,\zeta)\Big\}\\
&\quad-\nabla_{x,y,z}\gamma(v,\mathbf X^{\pi},\pi,\zeta)\cdot\mathbf K^{\pi,\eta}(v)^{\mathsf T}\tilde N(d\zeta,dv)\end{split}\\
\begin{split}\label{line:nablaGamma2}&+\int_0^t\int_{\mathbb R_0} \frac{1}{s}\Big\{\gamma(t,\mathbf X^{\pi},\pi+s\eta,\zeta)-\gamma(v,\mathbf{X}^{\pi},\pi,\zeta)\Big\}\\
&\quad-\frac{\partial}{\partial u}\gamma(v,\mathbf X^{\pi},\pi,\zeta)\eta(v)\tilde N(d\zeta,dv), \end{split}
\end{align}\normalsize
for $-\delta \leq t \leq T$.
From Kunita's inequality, we have that
\begin{align}
 \alpha_s(t)=\E[\sup_{-\delta\leq v\leq t}|A_s(v)|^2]\leq \int_0^t C_{2,T}2(I_{s,1}(v)+I_{s,2}(v))\, dv
\end{align}
where
\begin{align}
 \begin{split}\label{eq:I_s_1}I_{s,1}(v)&=\int_{\mathbb R_0}\E\Big[ \Big|\frac{1}{s}\Big\{\gamma(v,\mathbf X^{\pi+s\eta},\pi+s\eta,\zeta)-\gamma(v,\mathbf{X}^{\pi},\pi+s\eta,\zeta)\Big\}\\
&\quad-\nabla_{x,y,z}\gamma(v,\mathbf X^{\pi},\pi,\zeta)\cdot\mathbf K^{\pi,\eta}(v)^{\mathsf T}\Big|^2       \Big]\nu(d\zeta)\end{split}\\
\begin{split}\label{eq:I_s_2}I_{s,2}(v)&=\int_{\mathbb R_0}\E\Big[ \Big|\frac{1}{s}\Big\{\gamma(t,\mathbf X^{\pi},\pi+s\eta,\zeta)-\gamma(v,\mathbf{X}^{\pi},\pi,\zeta)\Big\}\\
&\quad-\frac{\partial}{\partial u}\gamma(v,\mathbf X^{\pi},\pi,\zeta)\eta(v) \Big|^2\Big]\nu(d\zeta).
\end{split}
\end{align}
We will show that 
$\int_0^t I_{s,2}(v)dv\rightarrow 0$ as $s\rightarrow 0$ and that  $I_{s,1}(v)$ are bounded by  terms on the form
\begin{align}\label{Gronwall}
\vartheta_s(v)+ \varphi(v)\alpha_s(v)                                                                    
\end{align}
where $\varphi\geq 0$ is integrable and for fixed $s$, $\vartheta_s\geq$ is integrable. Moreover it holds that  $\int_0^t \vartheta_s(v)dv\rightarrow 0$ as $s\rightarrow 0$. From Gr\"{o}nwall's inequality (see, e.g. the version in \cite{MR2512800}), it holds that
\begin{align*}
 \alpha_s(t)\leq 2C_{2,T}\int_0^T \Big(\vartheta_s(v)+I_{s,2}(v)\Big)dv\cdot \exp\Big\{2C_{2,T}\int_0^t    \varphi(v)dv\Big\}\rightarrow 0
\end{align*}
as $s\rightarrow 0$.
We first consider $I_{s,1}$ from equation (\ref{eq:I_s_1}). Let $\nabla_{x,y,z}$ denote the gradient with respect to the variables $x,y,z$. Applying Taylor's formula with integral remainder and adding and subtracting a term yields
\small\begin{align}
&I_{s,1}(v)=\int_{\mathbb R}\E\Big[\Big|   \frac{1}{s}\Big\{\gamma(v,\mathbf X^{\pi+s\eta},\pi+s\eta,\zeta)-\gamma(v,\mathbf{X}^{\pi},\pi+s\eta,\zeta)\Big\}\nonumber\\
&\quad-\nabla_{x,y,z}\gamma(v,\mathbf X^{\pi},\pi,\zeta)\cdot\mathbf K^{\pi,\eta}(v)^{\mathsf T}    \Big|^2\Big]\nu(d\zeta)\nonumber\\
&=\int_{\mathbb R}\E\Big[\Big|  \int_0^1\nabla_{x,y,z}\gamma(v,\mathbf X^{\pi}+\lambda\mathbf F_s(v), \pi+s\eta,\zeta)\cdot\frac{1}{s} \mathbf F_s(v)^{\mathsf T}\nonumber\\
&\quad-\nabla_{x,y,z}\gamma(v,\mathbf X^{\pi},\pi,\zeta)\cdot\mathbf K^{\pi,\eta}(v)^{\mathsf T}\,d\lambda  \Big|^2\Big]\nu(d\zeta) \nonumber\\
&=\int_{\mathbb R}\E\Big[\Big|    \int_0^1\nabla_{x,y,z}\gamma(v,\mathbf X^{\pi}+\lambda\mathbf F_s(v), \pi+s\eta,\zeta)\cdot \mathbf A_s(v)^{\mathsf T}\nonumber\\
&\quad+\Big(\nabla_{x,y,z}\gamma(v,\mathbf X^{\pi}+\lambda\mathbf F_s(v), \pi+s\eta,\zeta)-\nabla_{x,y,z}\gamma(v,\mathbf X^{\pi},\pi,\zeta)\Big)\cdot\mathbf K^{\pi,\eta}(v)^{\mathsf T}\,d\lambda    \Big|^2\Big]\nu(d\zeta)\nonumber\\
\begin{split}\label{lines:varphi_1}&\leq \int_{\mathbb R}\E\Big[2    \int_0^1\Big| \nabla_{x,y,z}\gamma(v,\mathbf X^{\pi}+\lambda\mathbf F_s(v), \pi+s\eta,\zeta)\cdot \mathbf A_s(v)^{\mathsf T}\Big|^2\,d\lambda    \Big]\nu(d\zeta)\end{split}\\
\begin{split}\label{lines:vartheta_1}&\quad+\int_{\mathbb R}\E \Big[   \int_0^1 2\Big|\Big(\nabla_{x,y,z}\gamma(v,\mathbf X^{\pi}+\lambda\mathbf F_s(v), \pi+s\eta,\zeta)\\
&\quad-\nabla_{x,y,z}\gamma(v,\mathbf X^{\pi},\pi,\zeta)\Big)\cdot\mathbf K^{\pi,\eta}(v)^{\mathsf T}\Big|^2\,d\lambda\Big]    \nu(d\zeta)\end{split}
\end{align}\normalsize
Now, we can use boundedness of $\nabla\gamma$ and the inequality (\ref{ineq:boldAexpectation}) to show that the term  (\ref{lines:varphi_1}), is bounded by
\begin{align*}2\parallel D\parallel^2_{L^2(\nu)}(2+\delta)\alpha_s(v). \end{align*}
Now consider the term (\ref{lines:vartheta_1}). Observe that since  $\lambda\mathbf F_s(v)$ converges pointwise to $0$, $\nabla_{x,y,z}\gamma$ is bounded and continuous, and the integrand is dominated by the $P\times[0,T]\times d\lambda\times\mathbb R$-integrable function $ 2D(\zeta)^2|\mathbf K(v)|^2$, it follows by Lebesgue's dominated convergence theorem that (\ref{lines:vartheta_1}) satisfies the conditions of $\vartheta_s$.


In a similar way, using Taylor's formula, we may show that
\small\begin{align*}
 &\int_0^T I_{s,2}(v)dv\\
&\leq \E\Big[\int_{\mathbb R}  \Big|\int_0^1\Big(\frac{\partial}{\partial u} \gamma(v,\mathbf X^{\pi},\pi+\lambda(s\eta),\zeta )&-\frac{\partial}{\partial u} \gamma(v,\mathbf X^{\pi},\pi,\zeta)\Big)\,d\lambda\cdot\eta(v)  \Big|^2     \nu(d\zeta)\Big]dv.
\end{align*}\normalsize
Now, the integrand is dominated by $ 2D(\zeta)^2\eta(s)^2$, which is\\ $P\times[0,T]\times d\lambda\times\mathbb R$-integrable, and converges point wise to $0$, because $\frac{\partial}{\partial u}\gamma$ is continuous. Therefore, 
\begin{align*}
 \int_0^T I_{s,2}(v)dv\rightarrow 0
\end{align*}
as $s\rightarrow 0$. This completes the proof of Lemma 4.3. \end{proof}
\begin{lemma}[\textbf{Differentiability of the performance functional $J$}]\label{lem:JDifferentiable1}
Suppose $\pi,\eta\in \mathcal A_{\mathbb G}$ with $\eta$ bounded. Suppose there exist an interval $ I\subset \mathbb R$ with $0\in I$, such that the perturbation $\pi+s\eta$ is in $\mathcal A_{\mathbb G}$ for each $s\in  I$.
Then the function $s\mapsto J(\pi+s\eta)$ has a (possibly one-sided) derivative at $0$ with
 \begin{align}
  \frac{d}{ds}J(\pi+s\eta)\Big|_{s=0}=\E\Big[ g'(X(T))\cdot K(T)+\int_0^T \nabla f(t,\mathbf{X}^{\pi}(t), \pi(t))\cdot (\mathbf K(t), \eta(t))^{\mathsf{T}} dt\Big].
\end{align}
\end{lemma}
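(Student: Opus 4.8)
The goal is to differentiate the performance functional
\begin{align*}
 J(\pi+s\eta)=\E\Big[\int_0^T f(t,\mathbf X^{\pi+s\eta}(t),\pi(t)+s\eta(t))\,dt+g(X^{\pi+s\eta}(T))\Big]
\end{align*}
at $s=0$. The natural plan is to write the difference quotient
\begin{align*}
 \frac{J(\pi+s\eta)-J(\pi)}{s}=\E\Big[\int_0^T \frac{f(t,\mathbf X^{\pi+s\eta}(t),\pi(t)+s\eta(t))-f(t,\mathbf X^{\pi}(t),\pi(t))}{s}\,dt+\frac{g(X^{\pi+s\eta}(T))-g(X^{\pi}(T))}{s}\Big],
\end{align*}
and then pass to the limit $s\to0$ inside the expectation and the $dt$-integral. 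The target formula strongly suggests the strategy: the limit of each difference quotient should be $\nabla f(t,\mathbf X^{\pi}(t),\pi(t))\cdot(\mathbf K(t),\eta(t))^{\mathsf T}$ for the running cost and $g'(X^{\pi}(T))\cdot K(T)$ for the terminal term. First I would apply Taylor's formula with integral remainder to each increment of $f$ and $g$, rewriting, for instance,
\begin{align*}
 \frac{f(t,\mathbf X^{\pi+s\eta}(t),\pi(t)+s\eta(t))-f(t,\mathbf X^{\pi}(t),\pi(t))}{s}=\int_0^1 \nabla f\big(t,\mathbf X^{\pi}(t)+\lambda\mathbf F_s(t),\pi(t)+\lambda s\eta(t)\big)\,d\lambda\cdot\frac{1}{s}\big(\mathbf F_s(t),s\eta(t)\big)^{\mathsf T},
\end{align*}
and similarly for $g$. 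The key point is that $\frac1s(\mathbf F_s(t),s\eta(t))=(\mathbf A_s(t)+\mathbf K(t),\eta(t))$, so by Lemma~\ref{lem:A} the factor $\frac1s\mathbf F_s(t)$ converges to $\mathbf K(t)$ in the appropriate $L^2$-sense as $s\to0$.

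The core of the argument is then a pointwise-convergence-plus-domination step. For a.e.\ $\omega$ and each $t$, Lemma~\ref{lem:F} guarantees $\mathbf F_s(t,\omega)\to0$, so that by continuity of $\nabla f$ (Assumption~\ref{assumption: F}) the integrand $\int_0^1\nabla f(t,\mathbf X^\pi+\lambda\mathbf F_s,\pi+\lambda s\eta)\,d\lambda$ converges to $\nabla f(t,\mathbf X^\pi(t),\pi(t))$, while $\frac1s(\mathbf F_s,s\eta)\to(\mathbf K,\eta)$ in $L^2(\Omega\times[0,T])$ by Lemma~\ref{lem:A}. To justify interchanging limit and integration I would invoke dominated convergence: the gradient factor is controlled by $D_1(\cdot)\in L^2(\Omega\times[0,T])$ (resp.\ $D_2\in L^2(\Omega)$ for $g$) via Assumption~\ref{assumption:NMP}~\ref{assumption:derivativesBoundedAndLipshitz_pFunctional}, and the companion factor $\frac1s(\mathbf F_s,s\eta)$ has an $s$-uniform $L^2$ bound coming from Lemma~\ref{lem:F} (the estimate $\E[\sup|\mathbf F_s|^2]\leq C\|\eta\|^2 s^2$ gives exactly an $s$-independent bound after dividing by $s$). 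Combining the $L^2$ convergence of one factor with the $L^2$ boundedness and a.e.\ convergence of the other, a Cauchy--Schwarz splitting of the product lets me pass to the limit in the product and conclude that the difference quotient converges in $L^1(\Omega\times[0,T])$ to the claimed integrand.

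I expect the main obstacle to be the rigorous interchange of limits in the product of two $s$-dependent factors, since neither factor is deterministic and one only has $L^2$-convergence of each. The clean way around this is to split
\begin{align*}
 \tfrac1s\mathbf F_s=\mathbf K+\mathbf A_s,
\end{align*}
treat the $\mathbf K$-term (whose companion gradient factor converges a.e.\ and is dominated) by dominated convergence, and bound the $\mathbf A_s$-term by Cauchy--Schwarz using boundedness of $\nabla f$ and the fact that $\E[\sup|\mathbf A_s|^2]\to0$ from Lemma~\ref{lem:A}. A secondary technical point, which I would flag but not belabour, is the usual caveat when $\mathcal V$ is not open: the derivative at $s=0$ may be only one-sided, and Taylor's formula in the $u$-variable requires the $C^1$-extension convention already adopted in the proof of Lemma~\ref{lem:A}. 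Once both terms are handled, collecting them yields
\begin{align*}
 \frac{d}{ds}J(\pi+s\eta)\Big|_{s=0}=\E\Big[g'(X(T))\cdot K(T)+\int_0^T\nabla f(t,\mathbf X^{\pi}(t),\pi(t))\cdot(\mathbf K(t),\eta(t))^{\mathsf T}\,dt\Big],
\end{align*}
which is the assertion of the lemma.
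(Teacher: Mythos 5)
Your proposal is correct and follows essentially the same route as the paper's proof: Taylor's formula with integral remainder, the splitting $\tfrac1s\mathbf F_s=\mathbf K+\mathbf A_s$, Cauchy--Schwarz together with Lemma~\ref{lem:A} for the $\mathbf A_s$-term, and dominated convergence (domination by $D_1$, $D_2$ plus a.e.\ convergence $\mathbf F_s\to 0$ from Lemma~\ref{lem:F}) for the remaining terms. The only cosmetic difference is that the paper first adds and subtracts $f(t,\mathbf X^{\pi},\pi+s\eta)$ to separate the state and control increments before Taylor-expanding each, and treats only the case $g=0$, whereas you use a single joint expansion and also write out the terminal term.
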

\begin{proof}
For simplicity, we consider only the case where $g=0$. By using Taylor's formula with integral remainder, and proceeding as in the previous proof, one can show that
%
\small\begin{align}
 \Big|&\frac{J(\pi+s\eta)-J(\pi)}{s}-\E\Big[\int_0^T \nabla f(t,\mathbf{X}^{\pi},\pi)\cdot (\mathbf K(t),\eta(t))^{\mathsf{T}} dt\Big]\Big|\nonumber\\
&=\E\Big[\int_0^T\Big|\frac{f(t,\mathbf X^{\pi+s\eta},\pi(t)+s\eta)-f(t,\mathbf X^{\pi},\pi)}{s}-  \nabla f(t,\mathbf{X}^{\pi},\pi)\cdot (\mathbf K(t),\eta(t))^{\mathsf{T}} \Big|\Big]dt \nonumber\\
&\leq\E\Big[ \int_0^T\Big|\frac{f(t,\mathbf X^{\pi+s\eta},\pi+s\eta)-f(t,\mathbf X^{\pi},\pi+s\eta)}{s} -\nabla_{x,y,z}f(t,\mathbf X^{\pi},\pi)\cdot\mathbf K(t)^{\mathsf T}\nonumber \\
&+\Big|\frac{f(t,\mathbf X^{\pi},\pi+s\eta)-f(t,\mathbf X^{\pi},\pi)}{s} -\frac{\partial}{\partial u}f(t,\mathbf X^{\pi},\pi)\cdot\eta(t) dt\ \Big]\nonumber\\
&\leq  \E\Big[\int_0^T \int_0^1\Big|\nabla_{x,y,z} f(t,\mathbf X^{\pi}+\lambda \mathbf F_s, \pi+s\eta)\cdot\mathbf A_s(t)^{\mathsf T}\Big|d\lambda dt\Big] \label{line:varsigma3}\\
\begin{split}\label{line:varsigma}&+\E\Big[  \int_0^T   \int_0^1\Big| \Big(\nabla_{x,y,z}f(v,\mathbf X^{\pi}+\lambda\mathbf F_s, \pi+s\eta,\zeta)-\nabla_{x,y,z}f(v,\mathbf X^{\pi},\pi+\eta,\zeta)\Big)\\
 &\hspace{2.3cm}\cdot\mathbf K^{\pi,\eta}(v)^{\mathsf T}      \Big|\,d\lambda dt\Big]\end{split}\\
&+   \E \Big[ \int_0^T \Big|\int_0^1\Big(\frac{\partial}{\partial u} f(v,\mathbf X^{\pi},\pi+\lambda(s\eta),\zeta )-\frac{\partial}{\partial u} f(v,\mathbf X^{\pi},\pi,\zeta)\Big)\,d\lambda\cdot\eta(v)  \Big|dt\Big]\label{line:varsigma2}
\end{align}\normalsize 

The term (\ref{line:varsigma3}) tends to $0$ because from the boundedness of $\nabla_{x,y,z}f$ and Cauchy Schwartz inequality, we have 
\begin{align}
\E\Big[&\int_0^T \int_0^1\Big|\nabla_{x,y,z} f(t,\mathbf X^{\pi}+\lambda \mathbf F_s, \pi+s\eta)\cdot\mathbf A_s(t)^{\mathsf T}\Big|d\lambda dt\Big]\\
&\leq \Big(\E\Big[\int_0^T|D_1(t)|^2   \Big]dt\Big)^{\frac{1}{2}}   \Big(\E\Big[\int_0^T|\mathbf A_s(t)|^2   \Big]dt\Big)^{\frac{1}{2}},
\end{align}
and this tends to $0$ as $s \to 0$, by Lemma \ref{lem:A}

The term (\ref{line:varsigma}) tends to $0$ as $s \to 0$ because the integrand is dominated by the function $2D_1|\mathbf K|$ which is integrable, $\nabla_{x,y,z}f$ is continuous and $\lambda\mathbf F_s\rightarrow 0$ as $s \to 0$ for each $t,\lambda$ and a.e. $\omega$.

Similarly, the term (\ref{line:varsigma2}) tends to $0$ as $s \to 0$, because the integrand is dominated by the function $2D_1|\mathbf \eta|$ which is integrable, $\frac{\partial}{\partial u}f$ is continuous and $s\eta\lambda\rightarrow 0$, for each $t,\lambda$ and a.e. $\omega$. Hence the lemma is proved.\end{proof}

\begin{theorem}[\textbf{Differentiability of $J$ in terms of the Hamiltonian}]\label{lem:JDifferentiable2}
Suppose $\pi,\eta\in \mathcal A_{\mathbb G}$ with $\eta$ bounded. Suppose there exist an interval $ I\subset \mathbb R$ with $0\in I$ such that the perturbation $\pi+s\eta$ is in $\mathcal A_{\mathbb G}$ for each $s\in  I$.
Also assume that there exists unique corresponding adjoint processes
$p=p^{\pi}$   $q=q^{\pi}$ and $ r=r^{\pi}$.
Then
\begin{align}
\frac{d}{ds} J(\pi+s\eta)\big|_{s=0}=\E\Big[\int_0^T \frac{\partial}{\partial u}\mathcal H^{\pi}(t)\eta(t)dt\Big].
\end{align}
\end{theorem}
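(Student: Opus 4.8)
The plan is to start from the directional-derivative formula of Lemma~\ref{lem:JDifferentiable1} and convert its $x$-, $y$- and $z$-contributions into the single $u$-term by pairing the linearized process $K$ against the adjoint process $p$ through It\^o's formula. This is a near-verbatim reprise of the computation in the proof of Theorem~\ref{thm: sufficient}, but with $K$ (the variational process solving (\ref{eq:derivativeProcess})) playing the role of $X-\hat X$, and with exact equalities throughout in place of the concavity inequalities, since here the dynamics of $K$ are exactly the linearization of the state equation.

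First I would apply the It\^o product rule to $p(t)K(t)$ on $[0,T]$, using the dynamics (\ref{eq:derivativeProcess}) of $K$ and the adjoint BSDE (\ref{eq: 2.10}) for $p$. The continuous quadratic-covariation term produces $q(t)\,\nabla\sigma(t)\cdot(\mathbf K(t),\eta(t))^{\mathsf T}\,dt$, and the jump covariation produces its compensator $\int_{\mathbb R} r(t,\zeta)\,\nabla\gamma(t,\zeta)\cdot(\mathbf K(t),\eta(t))^{\mathsf T}\,\nu(d\zeta)\,dt$, exactly as in (\ref{eq:px_pxIto}). Since $K(0)=0$ and $p(T)=g'(X(T))$, integrating and taking expectations after localizing with stopping times $\tau_n\uparrow T$ (as in Theorem~\ref{thm: sufficient}) annihilates the martingale integrals and gives
\begin{align*}
\E\big[g'(X(T))K(T)\big] &= \E\Big[\int_0^T\Big\{-\mu(t)K(t) + p(t)\,\nabla b(t)\cdot(\mathbf K(t),\eta(t))^{\mathsf T}\\
&\qquad\quad + q(t)\,\nabla\sigma(t)\cdot(\mathbf K(t),\eta(t))^{\mathsf T}\\
&\qquad\quad + \int_{\mathbb R}r(t,\zeta)\,\nabla\gamma(t,\zeta)\cdot(\mathbf K(t),\eta(t))^{\mathsf T}\,\nu(d\zeta)\Big\}\,dt\Big],
\end{align*}
where I have used that $K(t)$ is $\mathcal F_t$-measurable to replace $\E[\mu(t)\,|\,\mathcal F_t]$ by $\mu(t)$ under the outer expectation.

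Next I would add this identity to the formula of Lemma~\ref{lem:JDifferentiable1}. By the definition (\ref{Hamiltonian}) of $\mathcal H$ and linearity of the gradient in $(x,y,z,u)$, the sum of the $f$-, $b$-, $\sigma$- and $\gamma$-gradient terms collapses into $\nabla\mathcal H(t)\cdot(\mathbf K(t),\eta(t))^{\mathsf T}$, so that
\begin{align*}
\frac{d}{ds}J(\pi+s\eta)\Big|_{s=0} = \E\Big[\int_0^T\Big\{\nabla\mathcal H(t)\cdot(\mathbf K(t),\eta(t))^{\mathsf T} - \mu(t)K(t)\Big\}\,dt\Big].
\end{align*}
Expanding the dot product componentwise and inserting the expression (\ref{eq: 2.11}) for $\mu$, the two $\partial\mathcal H/\partial x$ contributions cancel, and it remains to kill the $y$- and $z$-terms. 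The $y$-cancellation is a change of variables: shifting $t\mapsto t+\delta$ and using $K\equiv0$ on $[-\delta,0]$ turns $\E\int_0^T \frac{\partial\mathcal H}{\partial y}(t)K(t-\delta)\,dt$ into $\E\int_0^T \frac{\partial\mathcal H}{\partial y}(t+\delta)K(t)\mathbf 1_{[0,T-\delta]}(t)\,dt$, the $y$-part of $\mu$ (compare (\ref{eq: 2.22})). The $z$-terms are the crux: applying Fubini and the generalized duality formula of Proposition~\ref{prop: 2.1-NY} with $F=\frac{\partial\mathcal H}{\partial z}(s)$ and $\varphi(t)=K(t)\mathbf 1_{[s-\delta,s]}(t)$ rewrites $\E\int_0^T \frac{\partial\mathcal H}{\partial z}(t)\int_{t-\delta}^t K(r)\,dB(r)\,dt$ as $\E\int_0^T\big(\int_t^{t+\delta}\E[D_t(\frac{\partial\mathcal H}{\partial z}(s))\,|\,\mathcal F_t]\mathbf 1_{[0,T]}(s)\,ds\big)K(t)\,dt$, exactly the $z$-part of $\mu$ (this reproduces (\ref{eq: 2.21})). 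Both therefore cancel against $-\mu(t)K(t)$, leaving only $\E\int_0^T \frac{\partial}{\partial u}\mathcal H^\pi(t)\eta(t)\,dt$, which is the claim.

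The main obstacle is not the algebra but the analytic justification of the two delicate steps. First, the localization argument requires that the It\^o integrands be dominated by integrable processes so that the stochastic integrals have zero expectation and one may pass to the limit $\tau_n\uparrow T$; this rests on the moment bounds (\ref{ineq:XmomentEstimate}), (\ref{ineq:ZmomentEstimate}) and the adjoint integrability (\ref{est:adjointEquations}) combined with the boundedness of $\nabla b,\nabla\sigma,\nabla\gamma$ from Assumption~\ref{assumption:NMP}, precisely as in Theorem~\ref{thm: sufficient}. Second, the use of Proposition~\ref{prop: 2.1-NY} demands that $\frac{\partial\mathcal H}{\partial z}(s)\in L^2(\mathcal F_T,P)$ and that $t\mapsto K(t)\mathbf 1_{[s-\delta,s]}(t)$ be adapted and in $L^2(\lambda\times P)$; the former follows from (\ref{est:adjointEquations}) and the latter from Lemma~\ref{lemma:existenceAndUniqueness_K}, so both hypotheses are met.
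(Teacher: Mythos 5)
Your proposal is correct and follows essentially the same route as the paper's proof: It\^o's formula applied to $p(t)K(t)$ with localization by stopping times, combination with Lemma~\ref{lem:JDifferentiable1} to collapse the gradient terms into $\nabla\mathcal H\cdot(\mathbf K,\eta)^{\mathsf T}-\mu(t)K(t)$, and then cancellation of the $x$-, $y$- and $z$-contributions via the time shift and the generalized duality formula of Proposition~\ref{prop: 2.1-NY}, exactly as in the paper's equations (\ref{eq:p_K})--(\ref{eq:NMPlastLemma3}). The analytic justifications you flag (domination for the limit $\tau_n\uparrow T$, and the $L^2$ hypotheses for the duality formula) are the same ones the paper invokes.
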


\begin{proof}
Define a sequence of stopping times by 
\small\begin{align*}
 \tau_n := T \wedge \inf \Big\{t > 0 : \int_0^t&\Big(|p(s)|^2+|q(s)|^2+\int_{\mathbb R}|r(s, \zeta)|^2\nu(d \zeta)\Big)\\
&\cdot \Big(|\mathbf K(s)|^2+|\eta(s)|^2 \Big) ds \geq n\Big\}.
\end{align*}\normalsize
Clearly $\tau_n\rightarrow T$ $P$-a.s. as $n\rightarrow \infty$.
Observe that
\begin{align}\label{eq:gZequalspZ}
\E[g'({X}(T))\cdot{K}(T)] =\E[p(T)K(T)].
\end{align} 
\normalsize
From It\^o's formula, we find that
\small\begin{align}\label{eq:p_K}\begin{split}
p({\tau_n}) &K({\tau_n})=\int_0^{\tau_n}p(t)(\mathbf K(t),\eta(t))\cdot \Big[\nabla b(t) dt  +\nabla \sigma(t) dB(t)\\
&+\int_{\mathbb{R}}\nabla\gamma (t,\zeta) \tilde N(dt,d\zeta)\Big]^{\mathsf{T}}\\
&+\int_0^{\tau_n} K(t)\Big[\E[-\mu(t,\pi)|\mathcal{F}_t]dt+q(t)dB(t)+ \int_{\mathbb{R}}r(t,\zeta)\tilde{N}(dt,d\zeta)    \Big]\\
&+\int_0^{\tau_n} q(t)(\mathbf K(t),\eta(t))\cdot\nabla\sigma(t)^{\mathsf{T}} dt\\
&+ \int_0^{\tau_n} \int_{\mathbb{R}}r(t,\zeta)(\mathbf K(t),\eta (t))\cdot\nabla\gamma(t ,\zeta)^{\mathsf{T}}\tilde N(dt,d\zeta)\\
&+\int_0^{\tau_n}\int_{\mathbb{R}}r(t,\zeta)(\mathbf K(t), \eta(t))\cdot\nabla\gamma(t,\zeta)^{\mathsf{T}}\nu(d\zeta)dt.\end{split}
\end{align}\normalsize
\noindent where $(\mathbf K(t),\eta(t))$ is the concatenation of $\mathbf K(t^-)$ and $\eta(t)$ (see (\ref{not:K_eta})).

The stochastic integrals  in (\ref{eq:p_K}) have zero expectation, since their integrands are square integrable by the definition of the stopping times. Combining this with the definition (\ref{Hamiltonian}) of the Hamiltonian yields
\small\begin{align*}
\E[p(\tau_n)K(\tau_n)]&=\E\Big[\int_0^{\tau_n}(\mathbf K(t),\eta (t))\cdot \Big(p(t)\nabla b(t)\\
&+q(t)\nabla \sigma(t)+\int_{\mathbb{R}}r(t,\zeta) \nabla \gamma(t,\zeta)\nu(d\zeta)    \Big)^{\mathsf{T}} dt\Big]\nonumber\\
&+\E\Big[\int_0^{\tau_n} K(t)\E[-\mu(t,\pi)|\mathcal{F}_t]dt\Big]\nonumber\\
&=\E\Big[\int_0^{\tau_n} \Big(\nabla  \mathcal H(t)-\nabla f(t)\Big)\cdot(\mathbf K(t),\eta (t))^\mathsf{T}\, dt\Big]\\
&+\E\Big[\int_0^{\tau_n} K(t)\E[-\mu(t,\pi)|\mathcal{F}_t]dt\Big].
\end{align*}\normalsize
Now, since the adjoint processes $K$ and $\eta$ are square integrable (see (4.2)), the integrands above are dominated by an integrable processes, and hence by the dominated convergence theorem, it follows that
\small\begin{align*}
 &\E[p(T)K(T)]=\lim_{n\rightarrow \infty}\E[p(\tau_n)K(\tau_n)]\\
&=\lim_{n\rightarrow \infty}\E\Big[\int_0^{\tau_n} \Big(\nabla  \mathcal H(t)-\nabla f(t)\Big)\cdot(\mathbf K(t),\eta (t))^\mathsf{T}\, + K(t)\E[-\mu(t,\pi)|\mathcal{F}_t]dt\Big]\\
&=\E\Big[\int_0^{T} \Big(\nabla  \mathcal H(t,\pi)-\nabla f(t,)\Big)\cdot(\mathbf K(t),\eta (t))^\mathsf{T}\, + K(t)\E[-\mu(t,\pi)|\mathcal{F}_t]dt\Big].
\end{align*}\normalsize
Then, using Lemma \ref{lem:JDifferentiable1} and (\ref{eq:gZequalspZ}) gives
\small
\begin{align}
  \frac{d}{ds}&J(\pi+s\eta)\Big|_{s=0}=\E\Big[ p(T)K(T)+\int_0^T \nabla f(t,\mathbf{X}^{\pi}(t), \pi(t))\cdot (\mathbf K(t), \eta(t))^{\mathsf{T}} dt\Big]\nonumber\\
  &=\E\Big[\int_0^T \nabla \mathcal \mathcal H(t,\mathbf{X}(t),\pi(t), p(t), q(t), r(t))\cdot(\mathbf K_{t},\eta_{t})^\mathsf{T} dt\Big]\nonumber\\
&+\E\Big[\int_0^T K(t)\E[-\mu(t,\pi)|\mathcal{F}_t]dt\Big]\nonumber\\
&=\E\Big[\int_0^T\frac{\partial }{\partial x}\mathcal H^\pi(t) K(t) dt\Big]- E\Big[\int_0^T K(t)\frac{\partial }{\partial x}\mathcal H^\pi(t)dt\Big]\nonumber\\
&+\E\Big[\int_0^T\frac{\partial }{\partial y}\mathcal H^\pi(t) K(t-\delta) \, dt\Big]- E\Big[\int_0^T K(t)\frac{\partial }{\partial y}\mathcal H^\pi(t+\delta)1_{[0,T-\delta]}(t)dt\Big]\label{eq:NMPlastLemma1}\\
\begin{split}\label{eq:NMPlastLemma2}&+ \E\Big[\int_0^T\frac{\partial }{\partial z}\mathcal H^\pi(t)\int_{t-\delta}^t K(r)dB(r)\Big]\\
&\quad\quad\quad\quad-\E\Big[\int_0^T K(t)\int_t^{t+\delta} \E \Big[ D_t\Big(\frac{\partial \mathcal H^\pi}{\partial z}(r)\Big) | \mc{F}_t \Big] 1_{[0,T]}(r)dr dt\Big]
\end{split}\\
&+ \E\Big[\int_0^T  \frac{\partial}{\partial u} \mathcal H^\pi(t)\eta(t)dt \Big]=\E\Big[\int_0^T  \frac{\partial}{\partial u} \mathcal H^\pi(t)\eta(t)dt \Big].\nonumber
\end{align}
\normalsize
To prove the last equality, it is sufficient to show that each of the lines (\ref{eq:NMPlastLemma1}) and  (\ref{eq:NMPlastLemma2}) is equal to zero. Observe first that
\begin{align*}
 \E\Big[\int_0^T& \frac{\partial }{\partial y}\mathcal H^\pi(t) K (t-\delta) dt\Big]=\E[\int_{\delta}^T\frac{\partial }{\partial y}\mathcal H^\pi(t) K(t-\delta) \, dt\Big]\\
&=\E\Big[\int_0^{T-\delta}\frac{\partial }{\partial y}\mathcal H^\pi(t+\delta) K(t) dt\Big]=\E\Big[\int_0^T  K(t)\frac{\partial }{\partial y}\mathcal H^\pi(t+\delta)1_{[0,T-\delta]}(t)  dt\Big].
\end{align*}
Also, using  Fubini's theorem and the duality formula for the Malliavin derivative (Proposition~\ref{prop: 2.1-NY}), we can show that:
\begin{align}
 \E\Big[\int_0^T& \frac{\partial }{\partial z}\mathcal H^\pi(t)\int_{t-\delta}^t K(r) dB(r) dt\Big]=\int_0^T\E\Big[\int_{t-\delta}^t \E \Big[ D_r \Big( \frac{\partial }{\partial z}\mathcal H^\pi(t)\Big) | \mc{F}_r \Big] K(r) dr  \Big]dt\nonumber\\
&=\E\Big[\int_0^T\int_{0}^T  K(r) \E \Big[ D_r \Big( \frac{\partial }{\partial z}\mathcal H^\pi(t)\Big) | \mc{F}_r \Big] 1_{[t-\delta,t]}(r) dtdr \Big]\nonumber\\
&=\E\Big[\int_0^T\int_{0}^T  K(r) \E \Big[ D_r \Big( \frac{\partial }{\partial z}\mathcal H^\pi(t)\Big) | \mc{F}_r \Big] 1_{[r,r+\delta]}(t) dtdr \Big]\label{eq:NMPlastLemma3}\\
&=\E\Big[\int_0^T\int_{r}^{r+\delta}  K(r) \E \Big[ D_r \Big( \frac{\partial }{\partial z}\mathcal H^\pi(t)\Big) | \mc{F}_r \Big] 1_{[0,T]}(t) dtdr \Big]\nonumber\\
&=\E \Big[ \int_0^T   K(t)\int_t^{t+\delta} \E [ D_t\Big(\frac{\partial \mathcal H^{\pi}}{\partial z}(r)\Big) | \mc{F}_t ] 1_{[0,T]}(r)dr  dt \Big].\nonumber
\end{align}
This completes the proof of the theorem.
\end{proof}
\subsection{Necessary maximum principles}


In this section, we develop necessary maximum principles in terms of the Hamiltonian.

\begin{theorem}[\textbf{Necessary maximum principle  I}]
\label{Thrm:NMP_I}
Suppose $\hat{\pi}\in \mathcal{A}_{\mathbb{G}}$. Denote by $\hat X$ the corresponding  state process and suppose that there exist corresponding adjoint processes $\hat p$, $\hat q$, and $\hat r$. 
In addition we assume that for each $t_0\in[0,T]$ and each bounded $\mathcal{G}_{t_0}$-measurable random variable $\alpha$, the process $\eta(t)=\alpha 1_{[t_0,T]}(t)$ belongs to $\mathcal{A}_{\mathbb G}$.
Then the following statements are equivalent
\begin{enumerate}[i)]
 \item\label{eq:ddsJ} For each bounded $\eta\in  \mathcal{A}_{\mathbb{G}}$, 
\begin{align*}
 \frac{d}{d s}J(\hat{\pi}+s\eta)\Big\rvert_{s=0}=0.
\end{align*}
\item\label{eq:nmp_i} For each $t\in[0,T]$,
\begin{align*}
 \E\bigg[ \frac{\partial }{\partial u}\mathcal H\big( t, \hat {\mathbf X}(t), \hat\pi(t), \hat p(t), \hat q(t), \hat r(t)\big)\Big| \mathcal{G}_t \bigg]=0\quad P\textnormal{-a.s.}
\end{align*}
\end{enumerate}
Suppose in addition that whenever $\eta\in \mathcal{A}_{\mathbb G}$ is bounded, there exists $\epsilon>0$ such that
    \begin{align*}
      \pi+s\eta\in \mathcal{A}_{\mathbb G}\quad\textnormal{for each }s\in (-\epsilon,\epsilon).
     \end{align*}
If $\hat\pi$ is optimal then \ref{eq:ddsJ}) and \ref{eq:nmp_i}) holds.
\end{theorem}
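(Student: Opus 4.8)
The plan is to deduce statement \ref{eq:ddsJ}) directly from optimality by restricting the performance functional to the line $s\mapsto \hat\pi+s\eta$ and invoking the differentiability result of Theorem \ref{lem:JDifferentiable2}; statement \ref{eq:nmp_i}) then follows at once from the equivalence \ref{eq:ddsJ})$\Leftrightarrow$\ref{eq:nmp_i}) established in the first part of the theorem.

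First I would fix an arbitrary bounded $\eta\in\mc{A}_{\mathbb{G}}$ and, using the additional hypothesis applied to $\hat\pi$, choose $\epsilon>0$ so that $\hat\pi+s\eta\in\mc{A}_{\mathbb{G}}$ for every $s\in(-\epsilon,\epsilon)$. Define $h(s):=J(\hat\pi+s\eta)$ on this open interval. Because $\hat\pi$ is optimal, $J(\hat\pi)=\sup_{\pi\in\mc{A}_{\mathbb{G}}}J(\pi)\geq J(\hat\pi+s\eta)=h(s)$ for all such $s$, so $h$ attains its maximum over $(-\epsilon,\epsilon)$ at the interior point $s=0$.

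Next I would observe that, since $0$ now lies in the interior of the interval of admissible perturbations, Theorem \ref{lem:JDifferentiable2} furnishes a genuine two-sided derivative $h'(0)=\E\big[\int_0^T \frac{\partial}{\partial u}\mc{H}^{\hat\pi}(t)\,\eta(t)\,dt\big]$; the "possibly one-sided" caveat in that result disappears precisely because both $\eta$ and $-\eta$ generate admissible directions and the derivative formula is linear in $\eta$. A differentiable real function with an interior maximum has vanishing derivative there, so $h'(0)=0$. As $\eta$ was an arbitrary bounded element of $\mc{A}_{\mathbb{G}}$, this is exactly statement \ref{eq:ddsJ}). Finally, invoking the equivalence \ref{eq:ddsJ})$\Leftrightarrow$\ref{eq:nmp_i}) already proved in the first part of Theorem \ref{Thrm:NMP_I}, statement \ref{eq:nmp_i}) holds as well.

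The step I expect to require the most care is the passage from the one-sided derivative guaranteed by Theorem \ref{lem:JDifferentiable2} to a bona fide two-sided derivative, so that the elementary first-order condition at an interior maximum can be applied. This hinges entirely on the additional hypothesis supplying a symmetric neighborhood $(-\epsilon,\epsilon)$ of admissible perturbations; without it optimality would only yield the right derivative inequality $h'(0^+)\leq 0$, which is too weak to conclude that the derivative vanishes and hence too weak to obtain \ref{eq:nmp_i}).
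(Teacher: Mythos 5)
There is a genuine gap: your argument only establishes the final claim of the theorem (optimality $\Rightarrow$ \ref{eq:ddsJ}) and \ref{eq:nmp_i})) while taking the equivalence \ref{eq:ddsJ})$\Leftrightarrow$\ref{eq:nmp_i}) as ``established in the first part of the theorem.'' But that equivalence \emph{is} the first part of the theorem --- it is one of the assertions to be proved, not a prior result you may invoke. A telltale sign is that your proof never uses the hypothesis that for each $t_0\in[0,T]$ and each bounded $\mathcal{G}_{t_0}$-measurable $\alpha$ the process $\eta(t)=\alpha \boldsymbol{1}_{[t_0,T]}(t)$ lies in $\mathcal{A}_{\mathbb G}$; that hypothesis exists precisely to drive the implication \ref{eq:ddsJ})$\Rightarrow$\ref{eq:nmp_i}). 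Concretely, what is missing is: (a) for \ref{eq:nmp_i})$\Rightarrow$\ref{eq:ddsJ}), use Theorem \ref{lem:JDifferentiable2} together with the tower property, $\E\big[\tfrac{\partial}{\partial u}\hat{\mathcal H}(t)\,\eta(t)\big]=\E\big[\E[\tfrac{\partial}{\partial u}\hat{\mathcal H}(t)\,|\,\mathcal G_t]\,\eta(t)\big]=0$ since $\eta$ is $\mathbb G$-adapted; and (b) for \ref{eq:ddsJ})$\Rightarrow$\ref{eq:nmp_i}), plug $\eta(t)=\alpha\boldsymbol{1}_{[t_0,T]}(t)$ into the derivative formula to get $\E\big[\int_{t_0}^T \tfrac{\partial}{\partial u}\hat{\mathcal H}(t)\,dt\;\alpha\big]=0$ for all $t_0$ and all bounded $\mathcal G_{t_0}$-measurable $\alpha$, then differentiate in $t_0$ (or take difference quotients over $[t_0,t_0+h]$ and let $h\to0$) to conclude $\E[\tfrac{\partial}{\partial u}\hat{\mathcal H}(t_0)\,|\,\mathcal G_{t_0}]=0$ a.s. This is the content of the argument the paper delegates to Theorem 4.1 of {\O}ksendal, Sulem and Zhang.

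The portion you do prove is correct and matches the intended route: the symmetric admissible interval $(-\epsilon,\epsilon)$ turns the possibly one-sided derivative of Theorem \ref{lem:JDifferentiable2} into a two-sided one, and an interior maximum of the differentiable map $s\mapsto J(\hat\pi+s\eta)$ forces the derivative to vanish, giving \ref{eq:ddsJ}). Your closing remark about why the symmetric neighborhood is indispensable is also accurate. But as written the proposal proves only one of the theorem's two assertions and silently presupposes the other.
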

Using Theorem \ref{lem:JDifferentiable2}, the proof is  similar to that of Theorem 4.1 in \cite{OSZ1}.

If the space of admissible control values $\mathcal V$ is closed and  an optimal control have trajectories with values on the boundary of $\mathcal V$ on a non-negligible set, then the first necessary maximum principle is of little use. 

Suppose now  that $\mathcal A_{\mathbb G}$ is convex,  that $\hat\pi, \pi\in\mathcal A_{\mathbb G}$ with $\hat\pi$ optimal. Then the perturbation $\hat\pi+s(\pi-\hat\pi)\in\mathcal A_{\mathbb G}$ for every $s\in[0,1]$. And, hence it holds that
 $\frac{d}{d s}J(\hat{\pi}+s(\pi-\hat\pi))|_{s=0}\leq 0$,
for every $s\in[0,1]$, or equivalently (by Theorem  \ref{lem:JDifferentiable2}) that
\begin{align*}
\E\Big[\int_0^T \frac{\partial}{\partial u}\hat{\mathcal H}(t)(\pi(t)-\hat\pi(t))dt\Big]\leq 0.
\end{align*}

In particular this holds for every admissible $\pi$ of the form
\begin{align}\label{eq: piAssumption_NMP1d_v2}
 \pi_{h, t}(s):=
\begin{cases}
v,& s\in[t,t+h), \omega\in B \\
\hat\pi(s)&\textnormal{otherwise}
\end{cases}
\end{align}
where $t\in[0,T]$, $h>0$, $v\in\mathcal V$ and $B\in\mathcal G_{t}$. 
Fix $t\in[0,T]$, $B\in\mathcal{G}_{t}$.
Observe that
\begin{align*}
0&\geq\frac{1}{h}\E\Big[\int_{t}^{t+h} \frac{\partial}{\partial u}\hat{\mathcal H}(r)(\pi_{h,t}(r)-\hat\pi(r)) dr\Big]\\
&= \E\Big[ \frac{1}{h}\int_{t}^{t+h}\frac{\partial}{\partial u}\hat{\mathcal H}(r)(v-\hat\pi(r))dr 1_{B}  \Big]           
\end{align*}
Now since the above inequality holds for every $B\in \mathcal G_t$, it follows that
\begin{align*}
 0&\geq\E\Big[ \frac{1}{h}\int_{t}^{t+h}\frac{\partial}{\partial u}\hat{\mathcal H}(r)(v-\hat\pi(r))dr\big|\mathcal G_t \Big]   &&P-a.s.\\
&=\frac{1}{h}\int_{t}^{t+h}\E\Big[\frac{\partial}{\partial u}\hat{\mathcal H}(r)|\mathcal G_t \Big](v-\hat\pi(r)) dr  &&P-a.s.
\end{align*}
Letting $h\rightarrow 0$ in the above inequality, we obtain 
\begin{align*}
 \E\Big[\frac{\partial}{\partial u}\hat{\mathcal H}(r)|\mathcal G_t \Big](v-\hat\pi(r))\leq 0, &&P-a.s.
\end{align*}
for a.e. $t\in[0,T]$.

This gives the following maximum principle:

\begin{theorem}[\textbf{Necessary maximum principle  II}]\label{thrm:NMP_II}
Suppose that $\mathcal A_{\mathbb{G}}$ is a convex set, containing all controls of the form (\ref{eq: piAssumption_NMP1d_v2}). Assume that $\hat{\pi}\in \mathcal{A}_{\mathbb{G}}$ is optimal. Denote by $\hat X$ the solution of the corresponding  state equation and suppose  there exist corresponding adjoint processes $\hat p$, $\hat q$, and $\hat r$. 
Then
\begin{align*}
 \E\Big[\frac{\partial}{\partial u}\mathcal H\big( t, \hat {\mathbf X}(t), \hat\pi(t), \hat p(t), \hat q(t), \hat r(t)\big)\Big| \mathcal{G}_t \Big](v-\hat\pi(t))\leq 0
\end{align*} $dt\times P$-a.s.  
\end{theorem}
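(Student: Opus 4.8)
The plan is to convert the optimality of $\hat\pi$ into a variational inequality by combining the convexity of $\mathcal{A}_{\mathbb{G}}$ with the Hamiltonian differentiation formula of Theorem~\ref{lem:JDifferentiable2}, and then to localize this inequality both in the time variable $t$ and over the $\sigma$-algebra $\mathcal{G}_t$ by testing against the needle-type controls~\eqref{eq: piAssumption_NMP1d_v2}.

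First I would fix an arbitrary $\pi\in\mathcal{A}_{\mathbb{G}}$. Convexity of $\mathcal{A}_{\mathbb{G}}$ gives $\hat\pi+s(\pi-\hat\pi)\in\mathcal{A}_{\mathbb{G}}$ for all $s\in[0,1]$, and since $\hat\pi$ maximizes $J$, the (one-sided) derivative of $s\mapsto J(\hat\pi+s(\pi-\hat\pi))$ at $s=0$ must be nonpositive. Applying Theorem~\ref{lem:JDifferentiable2} with the bounded direction $\eta=\pi-\hat\pi$ then yields the global inequality
\begin{align*}
\E\Big[\int_0^T \frac{\partial}{\partial u}\hat{\mathcal{H}}(t)\big(\pi(t)-\hat\pi(t)\big)\,dt\Big]\leq 0.
\end{align*}

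Next I would substitute the controls $\pi_{h,t}$ of the form~\eqref{eq: piAssumption_NMP1d_v2}, which equal $\hat\pi$ except on $[t,t+h)\times B$, where (for $B\in\mathcal{G}_t$) they take the fixed value $v\in\mathcal{V}$. The time integral then collapses to $\int_t^{t+h}$ and carries the indicator $\mathbf 1_B$; because $B$ is $\mathcal{G}_t$-measurable I would condition on $\mathcal{G}_t$, pull $\mathbf 1_B$ out, and, since the resulting bound holds for every $B\in\mathcal{G}_t$, deduce
\begin{align*}
\frac{1}{h}\int_t^{t+h}\E\Big[\frac{\partial}{\partial u}\hat{\mathcal{H}}(r)\,\Big|\,\mathcal{G}_t\Big]\big(v-\hat\pi(r)\big)\,dr\leq 0\qquad P\text{-a.s.}
\end{align*}
Letting $h\to0$ and invoking the Lebesgue differentiation theorem would then give the asserted pointwise inequality for a.e.\ $t\in[0,T]$ and each $v\in\mathcal{V}$.

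The step I expect to be the main obstacle is the passage $h\to0$: one must justify that the time-average on the left converges to its integrand at $dt$-a.e.\ $t$, which needs the map $r\mapsto\E[\tfrac{\partial}{\partial u}\hat{\mathcal{H}}(r)\mid\mathcal{G}_t](v-\hat\pi(r))$ to be integrable near $t$ and, so that the limit reproduces the value at $t$, the c\`adl\`ag regularity of $\hat\pi$ to ensure $\hat\pi(r)\to\hat\pi(t)$. The square-integrability of the adjoint processes $p,q,r$ from~\eqref{est:adjointEquations}, together with the boundedness of $\nabla b,\nabla\sigma,\nabla\gamma$ and $\nabla f$ in Assumption~\ref{assumption:NMP}, ensures $\partial\mathcal{H}/\partial u$ is integrable and supplies the domination needed to differentiate the integral and to interchange it with the conditional expectation, yielding the claim $dt\times P$-a.s.
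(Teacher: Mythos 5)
Your proposal follows essentially the same route as the paper: convexity plus Theorem~\ref{lem:JDifferentiable2} applied to $\eta=\pi-\hat\pi$ gives the integrated variational inequality, which is then localized with the needle controls~\eqref{eq: piAssumption_NMP1d_v2}, conditioned on $\mathcal{G}_t$ using the arbitrariness of $B$, and passed to the limit $h\to 0$. The paper carries out exactly these steps (it states the limit without invoking the Lebesgue differentiation theorem explicitly, a technicality you rightly flag), so the argument is correct and matches the paper's own proof.
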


\section{Reduction of noisy memory to  discrete delay}
\label{sec: reduction}
In this section we formulate our one-dimensional noisy memory stochastic control problem as a two-dimensional control problem with {\it discrete delay}. This allows us to apply (a two-dimensional generalization of) previously known results from {\O}ksendal et al.~\cite{OSZ1} to get an alternative maximum principle for our original control problem. We then compare the maximum principles from the noisy memory-/Malliavin calculus approach and the discrete delay-approach.

Consider the original dynamics (2.1) 
for the process $X$, including the noisy memory term. For notational purposes, denote $X_1(t):=X(t)$. Define a new process $X_2(t)$ by
\begin{align}
\label{eq: x2}
X_2(t) := \int_{-\delta}^t X_1(s) dB(s).
\end{align}
Then, using the above transformation (\ref{eq: x2}), the dynamics in (2.1) 
can be rewritten as a two-dimensional SDE with {\it discrete delay} and {\it no noisy memory}:
\begin{align}
dX_1(t) &= b(t, X_1(t), X_1(t-\delta),X_2(t)-X_2(t - \delta),\pi(t))dt\nonumber \\[\smallskipamount]
&\quad+ \sigma(t, X_1(t), X_1(t-\delta),X_2(t)-X_2(t - \delta),\pi(t))dB(t) \nonumber\\[\smallskipamount]
&\quad+ \int_{\mb{R}} \gamma(t, X_1(t), X_1(t-\delta),X_2(t)-X_2(t - \delta),\pi(t))\tilde{N}(dt, d\zeta), \nonumber\\[\smallskipamount]
dX_2(t) &= X_1(t)dB(t), \nonumber\\[\smallskipamount]
X_1(t) &= \xi(t), \hspace{2cm} t \in [-\delta, 0], \nonumber\\[\smallskipamount]
X_2(t) &= \int_{-\delta}^t \xi(u)dB(u),  \hspace{0.35cm} t \in [-\delta, 0].\label{eq: 2dim}
\end{align}
In particular, we notice that by uniqueness of solutions, for any given $\pi\in\mathcal A_{\mathbb G}$, it follows that $X_1=X$, and that
\begin{align}\label{eq:X_Xtilde}
 \mathbf X(t)=(X_1(t),X_1(t-\delta), X_2(t)-X_2(t-\delta))
\end{align}
when $\mathbf X$ is defined as in Section (\ref{sec:ShortHandNotation}). Furthermore, under  Assumption \ref{ass:existenceUniqueness} in Section 2, a  unique solution always exists.
%
%
If we write $\tilde{X}(t):= ( X_1(t), X_2(t))^{\mathsf T}$ and $
 \tilde{Y}(t) := \tilde{X}(t-\delta)$, then the vector form of this equation is
\begin{align}
 d\tilde{X}(t) &= \tilde{b}(t, \tilde{X}(t), \tilde{Y}(t), \pi(t))dt + \tilde{\sigma}(t, \tilde{X}(t), \tilde{Y}(t), \pi(t))dB(t) \\ 
&\quad+ \int_{\mb{R}} \tilde{\gamma}(t, \tilde{X}(t), \tilde{Y}(t), \pi(t)) \tilde{N}(dt, d\zeta), 
\label{eqn: xtilde}
\end{align}
\noindent where
\begin{align}
 \tilde{X}(t) &:= \begin{bmatrix} \xi(t) \\ \int_{-\delta}^t \xi(l)dB_1(l) \end{bmatrix}, \hspace{0.2cm} t \in [-\delta,0], \\
\tilde{b}(t, x_1,x_2,y_1, y_2, \pi(t)) &:= \begin{bmatrix} b(t, x_1, y_1,x_2-y_2,u) \\ 0 \end{bmatrix}, \nonumber
\\
\tilde{\gamma}(t, x_1,x_2 y_1,y_2,u )&:= \begin{bmatrix} \gamma(t,  x_1, y_1,x_2-y_2,u) \\ 0 \end{bmatrix}, \nonumber
\\
\tilde{\sigma}(t,  x_1,x_2 y_1,y_2,u )& := \begin{bmatrix} \sigma(t, x_1, y_1,x_2-y_2,u) \\ x_1\end{bmatrix}.
\nonumber
\end{align}

%
%
This is a two-dimensional SDE with discrete delay and jumps. The results of {\O}ksendal et al.~\cite{OSZ1} can, in a straight-forward manner, be generalized to two dimensional dynamics. Hence, we can write down the performance function, Hamiltonian and adjoint equations as in \cite{OSZ1}.
The performance functional  (\ref{eq: performanceFunctional}), can be rewritten as 
\begin{align*}
 {J}(\pi) = \E\Big[\int_0^T \tilde{f}(t, \tilde{X}(t), \tilde{Y}(t), \pi(t)) dt + \tilde{g}(\tilde{X}(T))\Big], &&\pi \in \mc{A}_{\mb{G}},
\end{align*}
\noindent where
\begin{align*}
 \tilde{f}(t, x_1,x_2, y_1, y_2, u,) &= f(t, x_1, y_1,x_2-y_2,u) ,\textnormal{ and}\\
\tilde g(x_1, x_2)&=g(x_1).
\end{align*}
%

%

Now, the Hamiltonian for the reduced problem, denoted by $H$, is 
\begin{align}
H&(t, x_1, x_2, y_1, y_2, u, p_1, p_2, q_1, q_2, r_1(\cdot), r_2(\cdot))\nonumber\\
:&= \tilde f (t, x_1, x_2, y_1, y_2, u) +\tilde b^{\mathsf T}(t, x_1, x_2, y_1, y_2, u) \begin{bmatrix} p_1 \\ p_2 \end{bmatrix}\nonumber\\
&\quad+ \tilde \sigma^{\mathsf T}(t, x_1, x_2, y_1, y_2, u) \begin{bmatrix} q_1 \\ q_2 \end{bmatrix}+\int_{\mathbb R_0}\tilde \gamma^{\mathsf T}(t, x_1, x_2, y_1, y_2, u,\zeta) \begin{bmatrix} r_1(\zeta) \\ r_2(\zeta) \end{bmatrix} \nu(d\zeta)\nonumber\\
&=f(t,x_1, y_1, x_2-y_2,u)+b(t,x_1, y_1, x_2-y_2,u)p_1\nonumber\\
&\quad+\sigma(t,x_1, y_1, x_2-y_2,u)q_1+x_1q_2+ \int_{\mathbb R_0}  \gamma(t,x_1, y_1, x_2-y_2,u, \zeta)r_1(\zeta) \nu(d\zeta)\nonumber\\
&=\mathcal H(x_1, y_1, x_2-y_2, u, p_1, q_1, r_1)+x_1 q_2\label{eq:H_mathcalH}
\end{align}\normalsize
where $\mathcal H$ is the Hamiltonian from the 1-dimensional problem  (\ref{Hamiltonian}).

 The time-advanced BSDEs defining the adjoint equations for $\tilde p=(p_1, p_2)^{\mathsf T}$,  $\tilde q=(q_1, q_2)^{\mathsf T}$ and  $\tilde r=(r_1, r_2)^{\mathsf T}$ are given by the system

\small\begin{align*}
 d\tilde p(t)&=-\E[\nabla_x H^{\mathsf{T}}(t,\tilde X(t), \tilde Y(t),\pi(t), \tilde p(t), \tilde q(t), \tilde r(t))\\
&\quad +\nabla_y H^{\mathsf T}(t+\delta,\tilde X(t+\delta), \tilde Y(t+\delta),\pi(t+\delta), \tilde p(t+\delta), \tilde q(t+\delta), \tilde r(t+\delta))\boldsymbol 1_{[0,T]}(t+\delta)|\mathcal F_t] dt\\
&\quad \tilde q(t) dB(t)+ \int_{\mathbb R}\tilde r(t,\zeta)\tilde N(dt,d\zeta)\\
\tilde p(T)&=\nabla \tilde g^{\mathsf T}(\tilde X(T)).
\end{align*}\normalsize
If we write the equation for $p_1$ and $p_2$ separately, and combine this with (\ref{eq:X_Xtilde}) and (\ref{eq:H_mathcalH}), we obtain the following system
\begin{align}
\label{eq: AdjointAlt1}
\begin{split}
dp_1(t) &= -\E[\mu_1(t) | \mc{F}_t]dt + q_{1}(t)dB(t) + \int_{\mb{R}} r_1(t,\zeta) \tilde{N}(dt, d\zeta) \\
p_1(T) &= g'(X_1(T)),
\end{split}\\
&\nonumber\\
\label{eq: AdjointAlt2}
\begin{split}
dp_2(t) &= -\E[\mu_2(t) | \mc{F}_t]dt + q_{2}(t)dB(t) + \int_{\mb{R}} r_2(t,\zeta) \tilde{N}(dt, d\zeta) \\
p_2(T) &= 0.
\end{split}
\end{align}
where,
\begin{align}\label{mu_1}
\mu_1(t)&= q_2(t)
+\frac{\partial }{\partial x}\mathcal H(t, \mathbf X(t), \pi(t), p_1(t),  q_1(t), r_1(t) )\\
&\quad +\frac{\partial }{\partial y}\mathcal H(t+\delta, \mathbf X(t+\delta), \pi(t+\delta), p_1(t+\delta),  q_1(t+\delta), r_1(t+\delta) )\boldsymbol 1_{[0,T]}(t+\delta) \nonumber
\end{align}
\noindent and
\begin{align}\label{mu_2}
\mu_2(t)
&=\frac{\partial }{\partial z}\mathcal H(t, \mathbf X(t), \pi(t), p_1(t),  q_1(t), r_1(t) )\\
&\quad -\frac{\partial }{\partial z}\mathcal H(t+\delta, \mathbf X(t+\delta), \pi(t+\delta), p_1(t+\delta),  q_1(t+\delta), r_1(t+\delta) ) \boldsymbol 1_{[0,T]}(t+\delta).\nonumber
\end{align}
This is a 2-dimensional time advanced BSDE (ABSDE). In the 1-dimensional case, existence and uniqueness results for the solution of such ABSDEs can be found in {\O}ksendal, Sulem and Zhang~\cite{OSZ1}, Theorems 5.2-5.4. However, the extension to the 2-dimensional case is trivial, so the existence and uniqueness theorems apply to equations~\eqref{eq: AdjointAlt1} and \eqref{eq: AdjointAlt2} as well.

Now, we can state a sufficient maximum principle for this problem based on the (generalized) results from {\O}ksendal et al.~\cite{OSZ1}.  The following theorem holds under Assumption~\ref{ass:existenceUniqueness} of Section 2.
\begin{theorem} (A sufficient maximum principle via $2D$ discrete delay)\\
\label{thm: SufficientAlt}
Let $\hat{\pi} \in \mc{A}_{\mb{G}}$ with corresponding solution $\hat{X}_1, \hat{X}_2$ to the $2$-D discrete delay SDE~(\ref{eqn: xtilde}), with corresponding $\hat{Y}_1, \hat Y_2$. Suppose also that there exists corresponding adjoint processes $\hat{p}_1, \hat p_2, \hat q_1,  \hat{q}, \hat{r}_1$ and $\hat r_2$(i.e. solutions to the system (\ref{eq: AdjointAlt1})-(\ref{eq: AdjointAlt2}).) Suppose also that the following conditions hold:
\begin{enumerate}[i)]
 \item $(x_1,x_2) \mapsto g(x_1)$ and
\begin{align*}
 (x_1,x_2, y_1, y_2, u) \mapsto \mathcal H(t,x_1, y_1, x_2-y_2 ,u,\hat{p}_1(t),\hat{q}_1(t),\hat{r}_1(t))+x_1\hat q_2(t)
\end{align*}
\noindent are concave for all $t$ a.s.
\item
 \begin{align*}
 &\max_{v \in \mathcal U} \E[\mathcal H(t,\hat{\mathbf X}(t), v, \hat{p}_1(t), \hat{q}_1(t), \hat{r}_1(t,\cdot)) | \mc{G}_t] \\[\smallskipamount]
&= \E[\mathcal H(t, \hat{\mathbf X}(t),\hat\pi(t), \hat{p}_1(t), \hat{q}_1(t), \hat{r}_1(t,\cdot)) ) | \mc{G}_t]
\end{align*}
\noindent for all $t \in [0,T]$ a.s., where $\mc{U}$ is the set of admissible control values.
\end{enumerate}
Then $\hat{\pi}$ is an optimal control.
\end{theorem}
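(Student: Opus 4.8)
The plan is to derive the theorem directly from the sufficient maximum principle of \cite{OSZ1}, applied to the reduced two-dimensional discrete-delay system \eqref{eqn: xtilde}, rather than reproducing the It\^o/localisation computation from scratch. The pivotal observation is that, by uniqueness of solutions and \eqref{eq:X_Xtilde}, one has $X_1=X$, and since $\tilde f(t,x_1,x_2,y_1,y_2,u)=f(t,x_1,y_1,x_2-y_2,u)$ together with $Z(t)=X_2(t)-X_2(t-\delta)$, the reduced performance functional is \emph{literally} the original $J$ of \eqref{eq: performanceFunctional}. Hence $\hat\pi$ is optimal for the noisy memory problem \eqref{eq: 2.8} if and only if it is optimal for the two-dimensional problem with data $\tilde b,\tilde\sigma,\tilde\gamma,\tilde f,\tilde g$. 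It therefore suffices to check that hypotheses (i)--(ii) are exactly the hypotheses of the two-dimensional generalisation of the sufficient maximum principle of \cite{OSZ1} for the reduced Hamiltonian $H$.

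First I would translate the concavity hypothesis. That principle requires $(x_1,x_2)\mapsto\tilde g(x_1,x_2)=g(x_1)$ and the map $(x_1,x_2,y_1,y_2,u)\mapsto H(t,x_1,x_2,y_1,y_2,u,\hat p_1,\hat p_2,\hat q_1,\hat q_2,\hat r_1,\hat r_2)$ to be concave a.s.\ for all $t$. By \eqref{eq:H_mathcalH} the latter function equals $\mathcal H(t,x_1,y_1,x_2-y_2,u,\hat p_1,\hat q_1,\hat r_1)+x_1\hat q_2(t)$, and the former is $g(x_1)$; these are precisely the two maps assumed concave in hypothesis (i). Next I would translate the maximum condition: the reduced principle asks that $\E[H(t,\hat{\tilde X}(t),\hat{\tilde Y}(t),v,\dots)\mid\mathcal G_t]$ be maximised over $v\in\mathcal U$ at $v=\hat\pi(t)$. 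Using \eqref{eq:H_mathcalH} again, this conditional expectation equals $\E[\mathcal H(t,\hat{\mathbf X}(t),v,\hat p_1,\hat q_1,\hat r_1)\mid\mathcal G_t]+\E[\hat X_1(t)\hat q_2(t)\mid\mathcal G_t]$, and the extra term $\hat X_1(t)\hat q_2(t)$ does not depend on the control variable $v$. Consequently the conditional maximisation of $H$ over $v$ coincides with that of $\mathcal H$, so hypothesis (ii) is exactly the maximum condition required.

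The remaining step, and the one that needs genuine care, is the assertion that \cite{OSZ1} carries over verbatim to the two-dimensional setting \eqref{eqn: xtilde}. Concretely one must verify that the vector adjoint equation of \cite{OSZ1}, written componentwise, is exactly the system \eqref{eq: AdjointAlt1}--\eqref{eq: AdjointAlt2} with drivers $\mu_1,\mu_2$ from \eqref{mu_1}--\eqref{mu_2}, and that the block structure here---in particular the degenerate second line $dX_2=X_1\,dB$ with terminal datum $p_2(T)=0$---does not disturb the integrability bookkeeping in the It\^o expansion of $\hat{\tilde p}(t)^{\mathsf T}\big(\tilde X(t)-\hat{\tilde X}(t)\big)$. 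The main obstacle is thus purely that the localising stopping-time argument and the ``stochastic integrals vanish in expectation'' step of the one-dimensional proof survive in the vector case; this follows from the moment estimate \eqref{ineq:XmomentEstimate} for $X=X_1$, the corresponding square-integrability of $X_2=\int_{-\delta}^{\cdot}X_1\,dB$ obtained from It\^o's isometry, and the adjoint integrability \eqref{est:adjointEquations}. Once this is in place, concavity combined with the maximum condition yields $J(\pi)-J(\hat\pi)\le 0$ for every admissible $\pi$ exactly as in \cite{OSZ1}, and the optimality of $\hat\pi$ follows.
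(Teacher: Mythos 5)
Your proposal is correct and follows essentially the same route as the paper: reduce to the two-dimensional discrete-delay problem, invoke the two-dimensional generalization of the sufficient maximum principle of \cite{OSZ1} (with the localising stopping-time argument as in Theorem \ref{thm: sufficient}), and translate the hypotheses through the identity \eqref{eq:H_mathcalH}. The paper states this in two sentences; you supply the translation of the concavity and maximum conditions (including the observation that the extra term $x_1\hat q_2(t)$ is control-independent) explicitly, which is a faithful elaboration rather than a different argument.
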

\begin{proof}
 This follows from the expressions above and a generalization of the results in {\O}ksendal et al.~\cite{OSZ1} using the stopping time technique from the proof of Theorem~\ref{thm: sufficient} (from {\O}ksendal and Sulem~\cite{OksendalSulemRiskMin}). Also, we have expressed the 2D- Hamiltonian $H$ in terms of our 1D Hamiltonian $\mathcal H$ as in (\ref{eq:H_mathcalH})
\end{proof}
%



Similarly, we can find a necessary maximum principle using the (generalized) results from {\O}ksendal et al.~\cite{OSZ1}. In the following theorem, we impose Assumption~\ref{ass:existenceUniqueness} of Section 2 and Assumption \ref{assumption:NMP} of Section 4.
\begin{theorem} (Necessary maximum principle via $2$D discrete delay)\\
\label{thm: NecessaryAlt}
Let $\hat{\pi} \in \mc{A}_{\mb{G}}$ with corresponding solution $\hat{X}_1, \hat{X}_2$ to the $2$D discrete delay SDE~(\ref{eqn: xtilde}), with corresponding $\hat{Y}_1, \hat Y_2$. Suppose also that there exists corresponding adjoint processes $\hat{p}_1, \hat p_2, \hat q_1,  \hat{q}, \hat{r}_1$ and $\hat r_2$ (i.e. solutions to the system (\ref{eq: AdjointAlt1})-(\ref{eq: AdjointAlt2}).)
%
%
%
Then, the following statements are equivalent,
 \begin{enumerate}
\item[$(i)$] For all bounded $\beta \in \mc{A}_{\mb{G}}$,
\begin{align}
 \frac{d}{ds} {J}(\hat{\pi} + s\beta) |_{s=0} = 0.
\end{align}

\item[$(ii)$] For all $t \in [0,T]$,
\begin{align}
 \E[\frac{\partial}{\partial u} \mathcal H(t,\hat{\mathbf X}(t), \hat\pi(t),\hat{p}_1(t), \hat{q}_1(t), \hat{r}_1(t, \cdot)) | \mc{G}_t] = 0 \mbox{ a.s.}
\end{align}
 \end{enumerate}
\end{theorem}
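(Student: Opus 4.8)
The plan is to deduce this equivalence from the (two-dimensional generalization of the) necessary maximum principle of {\O}ksendal et al.~\cite{OSZ1} applied to the reduced discrete-delay system \eqref{eqn: xtilde}, and then to translate the resulting condition on the $2D$ Hamiltonian $H$ back to the $1D$ Hamiltonian $\mathcal{H}$ by means of the identity \eqref{eq:H_mathcalH}.

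First I would observe that the reduction alters neither the control nor the performance functional: the control $\pi$ is common to both formulations, $\mathcal{A}_{\mathbb G}$ is unchanged, and since $X_1=X$ together with the definitions of $\tilde f,\tilde g$ the reduced functional coincides with the original $J(\pi)$. Hence the directional-derivative statement $(i)$ is literally the same statement in the two settings, and $\hat\pi+s\beta$ is admissible in the reduced problem exactly when it is admissible in the original one. Next I would invoke the necessary maximum principle of \cite{OSZ1}, generalized to the vector-valued dynamics \eqref{eqn: xtilde} with its $2D$ adjoint ABSDE \eqref{eq: AdjointAlt1}--\eqref{eq: AdjointAlt2}; this yields that $(i)$ holds if and only if
\begin{align*}
\E\Big[\tfrac{\partial}{\partial u} H\big(t,\tilde X(t),\tilde Y(t),\hat\pi(t),\hat{\tilde p}(t),\hat{\tilde q}(t),\hat{\tilde r}(t)\big)\,\big|\,\mathcal{G}_t\Big]=0
\end{align*}
for a.e. $t\in[0,T]$, $P$-a.s.

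The final step is the key algebraic simplification. By \eqref{eq:H_mathcalH} we have $H=\mathcal{H}(x_1,y_1,x_2-y_2,u,p_1,q_1,r_1)+x_1q_2$, and the extra term $x_1q_2$ is independent of $u$, so $\tfrac{\partial H}{\partial u}=\tfrac{\partial \mathcal{H}}{\partial u}$. Combining this with the identification \eqref{eq:X_Xtilde}, namely $\mathbf{X}(t)=(X_1(t),X_1(t-\delta),X_2(t)-X_2(t-\delta))$, and the matching of the adjoint components $\hat p_1,\hat q_1,\hat r_1$, the displayed condition becomes exactly $(ii)$.

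The main obstacle I anticipate is not the algebra but justifying the two-dimensional generalization of \cite{OSZ1} on which everything rests: one must check that the directional-differentiability computation and the It\^o/integration-by-parts argument carry over verbatim to the matrix-valued coefficients $\tilde b,\tilde\sigma,\tilde\gamma$ and the vector adjoint system, and that the existence/uniqueness theory for the $2D$ advanced BSDE \eqref{eq: AdjointAlt1}--\eqref{eq: AdjointAlt2} holds (the $1D$ case being \cite{OSZ1}, Theorems 5.2--5.4). While these extensions are straightforward, a careful proof should record that the stopping-time localization used in Theorem~\ref{thm: sufficient} applies componentwise so that all stochastic integrals have zero expectation, and that Assumption~\ref{assumption:NMP} supplies the integrability needed to pass to the limit.
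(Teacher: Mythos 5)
Your proposal is correct and follows essentially the same route as the paper: the paper's proof of this theorem is a one-line appeal to "the expressions above and \cite{OSZ1}," i.e.\ precisely the reduction to the $2$D discrete-delay system, the generalized necessary maximum principle of \cite{OSZ1}, and the identity \eqref{eq:H_mathcalH} giving $\tfrac{\partial H}{\partial u}=\tfrac{\partial \mathcal{H}}{\partial u}$ since the extra term $x_1q_2$ does not depend on $u$. You have simply made explicit the details the paper leaves implicit, including the correct caveats about justifying the $2$D generalization.
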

\begin{proof}
This follows from the expressions above and {\O}ksendal et al.~\cite{OSZ1}.
\end{proof}
\section{Solution of the noisy memory BSDE}
\label{sec: SolutionBSDE}
Now we have two pairs of necessary and sufficient maximum principles for the noisy memory problem. One pair of maximum principles, Theorem~\ref{thm: sufficient} and Theorem~\ref{Thrm:NMP_I}, was proved directly using Malliavin calculus. The other pair, Theorem~\ref{thm: SufficientAlt} and Theorem~\ref{thm: NecessaryAlt}, was proved indirectly by rewriting the problem as a $2$D optimal control problem with discrete delay and jumps, and then modifying previously known results of {\O}ksendal et al.~\cite{OSZ1} to derive the maximum principles. 

We have seen that essentially, the only difference in the 1D and the 2D maximum principles, is that in the  $1D$ maximum principle, the 1D Hamiltonian $\mathcal H$ is evaluated at the 1D adjoint processes $p,q,r$ and in the 2D maximum principle, the 1D Hamiltonian $\mathcal H$ is evaluated at the 2D adjoint processes $p_1, q_1, r_1$. 
This means that we do not actually need to know the processes $p_2, q_2$ and $r_2$, and thus the 2D approach seems unnecessarily complicated.


However, in this section, we establish a connection between
the adjoint processes in the Malliavin calculus approach to the corresponding ones in the
discrete delay approach. Recall that we say that processes $p,q,r$ is a solution to the \emph{noisy memory} BSDE if $r, q$ are predictable, the estimate \eqref{est:adjointEquations} holds and $p,q,r$ satisfy \eqref{eq: 2.10}-\eqref{eq: 2.11}. Similarily, we say that $p_i,q_1, r_i$, $i=1,2$ is a solution to the 2D time advanced BSDE  if $q_i, r_i, i=1,2$ are predictable, the estimate
\begin{align}
 \E\Big[\sup_{t\in[0,T]}|p_i(t)|^2+\int_0^T \bigg \{ |q_i(t)|^2 +\int_{\mathbb{R}} |r_i(t,\zeta)|^2\nu(d\zeta)\bigg \} \, dt\Big]<\infty
\end{align}
holds for $i=1,2$, and $q_i, r_i, i=1,2$ satisfy
\eqref{eq: AdjointAlt1} -\eqref{eq: AdjointAlt2}.

\begin{theorem}
\label{thm: NoisyMemBSDE}
(Solution of the noisy memory BSDE)\\
Suppose that $(p_i,q_i,r_i)$; $i=1,2$ is the solution of the $2$-dimensional ABSDE \eqref{eq: AdjointAlt1} -\eqref{eq: AdjointAlt2}.
Define $p(t) := p_1(t)$, $q(t) := q_1(t)$, $r(t,\zeta) := r_1(t,\zeta)$, and suppose that $\E[\int_0^T\frac{\partial\mathcal H}{\partial z}(t)^2 dt\Big]<\infty$. Then $(p,q,r)$ solves the \emph{noisy memory} BSDE \eqref{eq: 2.10}-\eqref{eq: 2.11}. Moreover,
\begin{align}
 \label{eq: (1)-notat}
q_2(t) = \int_t^{t+ \delta} \E \Big[ D_t(\frac{\partial \mc{H}}{\partial z}(s)) | \mc{F}_t \Big] \boldsymbol{1}_{[0,T]}(s) ds.
\end{align}
\end{theorem}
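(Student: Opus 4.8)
The plan is to verify two things: that the pair $(p,q,r):=(p_1,q_1,r_1)$ satisfies the noisy memory BSDE \eqref{eq: 2.10}-\eqref{eq: 2.11}, and that the second component $q_2$ has the explicit Malliavin representation \eqref{eq: (1)-notat}. The natural strategy is to start from the explicit BSDE for $p_2$ in \eqref{eq: AdjointAlt2}, solve it to obtain a formula for $q_2(t)$, and then substitute that formula into the drift $\mu_1$ of the $p_1$-equation \eqref{mu_1}, thereby recovering exactly the noisy memory driver $\mu$ in \eqref{eq: 2.11}.

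First I would treat the equation for $p_2$. Since $p_2(T)=0$, integrating \eqref{eq: AdjointAlt2} gives
\begin{align*}
 p_2(t)=\E\Big[\int_t^T \mu_2(s)\,ds\,\Big|\,\mathcal F_t\Big],
\end{align*}
and the key observation is that $\mu_2$ in \eqref{mu_2} telescopes: the term $\frac{\partial\mathcal H}{\partial z}(s)$ minus its $\delta$-shifted copy collapses so that $\int_t^T\mu_2(s)\,ds$ reduces to $\int_t^{t+\delta}\frac{\partial\mathcal H}{\partial z}(s)\,\boldsymbol 1_{[0,T]}(s)\,ds$. Hence
\begin{align*}
 p_2(t)=\E\Big[\int_t^{t+\delta}\tfrac{\partial\mathcal H}{\partial z}(s)\,\boldsymbol 1_{[0,T]}(s)\,ds\,\Big|\,\mathcal F_t\Big].
\end{align*}
Next I would read off $q_2$ from the martingale-representation/It\^o structure of this conditional expectation. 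Writing $M(t):=\E[\int_0^{t+\delta}\frac{\partial\mathcal H}{\partial z}(s)\boldsymbol 1_{[0,T]}(s)\,ds\mid\mathcal F_t]$ and applying the generalized Clark-Ocone theorem \eqref{Clark-Ocone} together with differentiation under the conditional expectation, the $dB(t)$-coefficient of $p_2$ is precisely $\int_t^{t+\delta}\E[D_t(\frac{\partial\mathcal H}{\partial z}(s))\mid\mathcal F_t]\boldsymbol 1_{[0,T]}(s)\,ds$; matching this against the $q_2(t)\,dB(t)$ term in \eqref{eq: AdjointAlt2} yields \eqref{eq: (1)-notat}. The finiteness hypothesis $\E[\int_0^T\frac{\partial\mathcal H}{\partial z}(t)^2\,dt]<\infty$ is exactly what makes $M$ square-integrable and legitimizes the use of \eqref{eq:normOfDtF-varF} and Proposition~\ref{prop: 2.1-NY}.

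Finally, with $q_2$ identified, I would substitute \eqref{eq: (1)-notat} into \eqref{mu_1}. The term $\frac{\partial\mathcal H}{\partial x}(t)+\frac{\partial\mathcal H}{\partial y}(t+\delta)\boldsymbol 1_{[0,T]}(t+\delta)$ already matches the first two summands of $\mu$ in \eqref{eq: 2.11}, and the replacement of $q_2(t)$ by the integral supplies exactly the remaining Malliavin term $\int_t^{t+\delta}\E[D_t(\frac{\partial\mathcal H}{\partial z}(s))\mid\mathcal F_t]\boldsymbol 1_{[0,T]}(s)\,ds$; thus $\mu_1=\mu$ and $(p,q,r)$ solves \eqref{eq: 2.10}-\eqref{eq: 2.11}. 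I expect the main obstacle to be the rigorous justification of the differentiation-under-the-conditional-expectation step that produces the $q_2$ formula — that is, carefully commuting $D_t$ with the time integral $\int_t^{t+\delta}$ and with $\E[\cdot\mid\mathcal F_t]$ in the extended ($L^2(P)$ rather than $\mathbb D_{1,2}$) Malliavin framework of \cite{AaseEtAl}. The telescoping of $\mu_2$ and the predictability/integrability bookkeeping are routine once the square-integrability assumption is in force.
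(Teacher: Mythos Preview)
Your proposal is correct and follows essentially the same route as the paper: both express $p_2(t)$ as a conditional expectation of $\int_t^T\mu_2(s)\,ds$, telescope $\mu_2$ to $\int_t^{t+\delta}\frac{\partial\mathcal H}{\partial z}(s)\boldsymbol 1_{[0,T]}(s)\,ds$, identify $q_2$ via the Malliavin derivative of $p_2$, and substitute back into $\mu_1$. The only difference is packaging: the paper invokes the known BSDE identity $q_2(t)=D_t p_2(t)$ (citing \cite{OksendalRose,ElKarouiEtAl}) and then Proposition~3.12 of \cite{DOP09} to commute $D_t$ with the integral and the conditional expectation, whereas you phrase the same step as ``reading off the $dB$-coefficient via Clark--Ocone''; your auxiliary process $M(t)$ is not literally a martingale (the upper limit $t+\delta$ moves), so the cleaner way to make your step rigorous is exactly the $q_2=D_t p_2$ identity the paper uses.
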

\begin{proof}
 For simplicity, we may assume $r = r_1 = r_2 =0$, since the jump terms do not play an essential role here. First note that in general we have that if $(p_2,q_2)$ solves a BSDE of the form
\begin{align}
 dp_2(t) = -\theta(t,p_2(t),q_2(t))dt + q_2(t)dB(t); \mbox{ } p_2(T) = F
\end{align}
\noindent then
\begin{align}
 \label{eq: (2)-notat}
q_2(t) = D_t p_2(t) \mbox{ } 
\end{align}
See  for example {\O}ksendal and R{\o}se~\cite{OksendalRose} for a proof in this general setting. For an earlier
proof valid under more restrictive conditions see e.g. Proposition 5.3 in El Karoui, Peng and Quenez~\cite{ElKarouiEtAl}

Also, note that the solution $p_2(t)$ of \eqref{eq: AdjointAlt2} can be written as
\small\begin{align}
 p_2(t) &= \E[\int_t^T \E[\mu_2(s) | \mc{F}_s]ds | \mc{F}_t]\nonumber \\
&= \int_t^T \E[\mu_2(s) | \mc{F}_t] ds \nonumber\\
&= \int_t^T \E[\frac{\partial \mc{H}}{\partial z}(s, \mathbf X(s), \pi(s), p_1(s), q_1(s), r_1(s))\\
&\quad - \frac{\partial \mc{H}}{\partial z}(s + \delta,\mathbf X(s+\delta), \pi(s+\delta), p_1(s+\delta), q_1(s+\delta), r_1(s+\delta)) \boldsymbol{1}_{[0,T-\delta]}(s) | \mc{F}_t]ds \nonumber\\
&= \int_t^{t+\delta} \E[\frac{\partial \mc{H}}{\partial z}(s, \mathbf X(s), \pi(s), p_1(s), q_1(s), r_1(s)) | \mc{F}_t]\boldsymbol 1_{[0, T]}(s)ds.  \nonumber
\end{align}\normalsize
Combining this with \eqref{eq: (2)-notat} and using proposition 3.12 in~\cite{DOP09}, we get \eqref{eq: (1)-notat}. Also
\begin{align*}
 q_2(t)&=D_t\int_t^{t+\delta} \E[\frac{\partial \mc{H}}{\partial z}(s, \mathbf X(s), \pi(s), p_1(s), q_1(s), r_1(s)) | \mc{F}_t]\boldsymbol 1_{[0, T]}(s)ds\\
&=\int_t^{t+\delta} \E[D_t\frac{\partial \mc{H}}{\partial z}(s, \mathbf X(s), \pi(s), p_1(s), q_1(s), r_1(s)) | \mc{F}_t]\boldsymbol 1_{[0, T]}(s)ds
\end{align*}
Now, by replacing $q_2$ with $$\int_t^{t+\delta} \E[D_t\frac{\partial \mc{H}}{\partial z}(s, \mathbf X(s), \pi(s), p_1(s), q_1(s), r_1(s)) | \mc{F}_t]\boldsymbol 1_{[0, T]}(s)ds,$$ in the definition (\ref{mu_1}) of $\mu_1$, we see that the solutions $p_1,q_1, r_1$ of (\ref{eq: AdjointAlt1}) solve the $1D$ adjoint equation (\ref{eq: 2.10}).\end{proof}
We also have a converse of this theorem:
\begin{theorem}
 Suppose that $p,q,r$ solves the `\emph{noisy memory}' BSDE \eqref{eq: 2.10}-\eqref{eq: 2.11}, and that $\E\big[\int_0^T \frac{\partial \mathcal H}{\partial z}(t)^2 dt \Big] <\infty$. Define $p_1:=p, q_1:=q, r_1:=r$ and
\begin{align}
 p_2(t)&:=\int_t^{t+\delta}\E\Big[\frac{\partial \mathcal H}{\partial z}(s) \Big|\mathcal F_t\Big]\boldsymbol1_{[0,T-\delta]}(s)ds\\
q_2(t)&:=\int_t^{t+\delta}\E\Big[D_t\frac{\partial \mathcal H}{\partial z}(s)\Big|\mathcal F_t\Big]\boldsymbol 1_{[0,T-\delta]}(s)ds\\
r_2&:=0.
\end{align}
 Then  $(p_i, q_i, r_i), i=1,2$ solves the $2$-dimensional ABSDE  \eqref{eq: AdjointAlt1} - \eqref{eq: AdjointAlt2}
\end{theorem}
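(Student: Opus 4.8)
The plan is to verify the two rows of the $2$D ABSDE \eqref{eq: AdjointAlt1}--\eqref{eq: AdjointAlt2} separately, exploiting the observation that, once the stated form of $q_2$ is in hand, the $p_1$-row collapses \emph{exactly} onto the noisy memory BSDE \eqref{eq: 2.10}--\eqref{eq: 2.11}; thus essentially all the content sits in the $p_2$-row. First I would record integrability: the hypothesis $\E[\int_0^T\frac{\partial\mathcal H}{\partial z}(t)^2\,dt]<\infty$, together with Jensen's inequality, the It\^o isometry and \eqref{eq:normOfDtF-varF}, makes $p_2$ and $q_2$ well defined and square integrable, so that the integrability estimate demanded of a $2$D solution holds for $i=2$; for $i=1$ it is inherited from \eqref{est:adjointEquations}, since $(p_1,q_1,r_1)=(p,q,r)$. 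As in the proof of Theorem~\ref{thm: NoisyMemBSDE}, I would reduce to $r=r_1=r_2=0$, the jump terms being handled identically and playing no structural role.

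Abbreviating $G(s):=\frac{\partial\mathcal H}{\partial z}(s)$, so that $\mu_2(s)=G(s)-G(s+\delta)\boldsymbol 1_{[0,T]}(s+\delta)$, the key step reverses the telescoping of Theorem~\ref{thm: NoisyMemBSDE}: substituting $u=s+\delta$ in the shifted term, whose cut-off confines $s$ to $[t,T-\delta]$ and thereby cancels the first term on $[t+\delta,T]$, yields
\begin{align*}
\int_t^T \E[\mu_2(s)\mid\mathcal F_t]\,ds=\int_t^{t+\delta}\E[G(s)\mid\mathcal F_t]\boldsymbol 1_{[0,T]}(s)\,ds=p_2(t)
\end{align*}
(the cut-offs in the definitions of $p_2$ and $q_2$ being read as $\boldsymbol 1_{[0,T]}$, consistently with Theorem~\ref{thm: NoisyMemBSDE}). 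Setting $M_t:=\E[\int_0^T\E[\mu_2(s)\mid\mathcal F_s]\,ds\mid\mathcal F_t]$ then gives $p_2(t)=M_t-\int_0^t\E[\mu_2(s)\mid\mathcal F_s]\,ds$ with $p_2(T)=0$, and the martingale representation of the continuous martingale $M$ identifies a volatility $q_2$ and forces $r_2=0$; hence $p_2$ solves \eqref{eq: AdjointAlt2}.

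It remains to identify this $q_2$ with the process in the statement and to close the $p_1$-row. For the former I would invoke the identity $q_2(t)=D_tp_2(t)$, valid for BSDEs of this form (\cite{OksendalRose}; see also \cite{ElKarouiEtAl}, Prop.~5.3), and differentiate the representation of $p_2$, commuting $D_t$ past the conditional expectation and the $ds$-integral via Proposition~3.12 of \cite{DOP09}, to get $q_2(t)=\int_t^{t+\delta}\E[D_tG(s)\mid\mathcal F_t]\boldsymbol 1_{[0,T]}(s)\,ds$. Feeding this into the definition \eqref{mu_1} of $\mu_1$ and using $\boldsymbol 1_{[0,T]}(t+\delta)=\boldsymbol 1_{[0,T-\delta]}(t)$, a term-by-term comparison shows $\mu_1$ equals the noisy memory driver $\mu$ of \eqref{eq: 2.11}; since $(p,q,r)$ solves \eqref{eq: 2.10}--\eqref{eq: 2.11} by hypothesis and $p_1(T)=p(T)=g'(X(T))=g'(X_1(T))$, the $p_1$-row \eqref{eq: AdjointAlt1} holds automatically, completing the proof. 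The main obstacle is precisely this Malliavin step: justifying $q_2=D_tp_2$ and the interchange of $D_t$ with $\E[\,\cdot\mid\mathcal F_t]$ and the time integral under only an $L^2$-bound on $\frac{\partial\mathcal H}{\partial z}$ --- the setting where the \emph{generalized} Malliavin derivative of \cite{AaseEtAl}, extended to all of $L^2(P)$, is indispensable, since $\frac{\partial\mathcal H}{\partial z}(s)$ need not lie in the classical space $\mathbb D_{1,2}$.
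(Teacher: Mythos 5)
Your proof is essentially correct, but at the decisive step it takes a genuinely different route from the paper. Both arguments dispose of the $p_1$-row identically (once $q_2$ has the stated form, $\mu_1$ is verbatim the noisy-memory driver $\mu$, so \eqref{eq: AdjointAlt1} is just \eqref{eq: 2.10}, which holds by hypothesis), both read the cut-offs as $\boldsymbol{1}_{[0,T]}$ consistently with Theorem~\ref{thm: NoisyMemBSDE}, and both reduce to the jump-free case. The difference is in the $p_2$-row. The paper performs a \emph{direct verification} of the stated triple $(p_2,q_2,0)$: after checking via \eqref{eq:normOfDtF-varF} that $q_2$ is integrable enough for the stochastic Fubini theorem, it computes $\int_t^T q_2(s)\,dB(s)$ explicitly --- swapping the $dr$ and $dB(s)$ integrals and resumming with the Clark--Ocone identity $\frac{\partial\mathcal H}{\partial z}(r)=\E[\frac{\partial\mathcal H}{\partial z}(r)|\mathcal F_t]+\int_t^r\E[D_s\frac{\partial\mathcal H}{\partial z}(r)|\mathcal F_s]\,dB(s)$ --- and lands exactly on $\int_t^T\E[\mu_2(s)|\mathcal F_s]\,ds-p_2(t)$. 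This establishes the integrated form of \eqref{eq: AdjointAlt2} in one stroke, shows $r_2=0$ is the correct jump component as a by-product, and never invokes the identity $q_2=D_tp_2$. You instead first manufacture an abstract solution from the martingale representation of $M_t=\E[\int_0^T\E[\mu_2(s)|\mathcal F_s]ds\,|\,\mathcal F_t]$ and then match its volatility to the stated $q_2$ via $q_2=D_tp_2$ and the commutation of $D_t$ with $\E[\cdot|\mathcal F_t]$ and the $ds$-integral --- i.e.\ you re-run the identification step of the \emph{forward} Theorem~\ref{thm: NoisyMemBSDE}. That is logically sound, but it imports the heavier machinery (the generalized $q=D_tp$ identity of \cite{OksendalRose}) into the converse, where the paper deliberately avoids it, and your claim that the representation ``forces $r_2=0$'' rests entirely on the reduction to the Brownian filtration; in the paper's computation that conclusion is automatic even before any such reduction. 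So: same endpoints, but the paper buys a more elementary and self-contained converse at the cost of one explicit Fubini/Clark--Ocone calculation, while your route buys conceptual economy (reuse of the forward argument) at the cost of the deeper Malliavin identity you correctly flag as the main obstacle.
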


\begin{proof}
Clearly,  the first part  \eqref{eq: AdjointAlt1} of the 2D ABSDE is satisfied. It remains to show that
\begin{align}\label{eq:Adjointpqr_2}
p_2(t)=\int_t^T\E\Big[  \frac{\partial \mathcal H}{\partial z}(s)-\frac{\partial \mathcal H}{\partial z}(s+\delta)\boldsymbol 1_{[0,T-\delta]}(s)\big|\mathcal F_s\Big]ds-\int_t^T q_2(s)dB(s).
\end{align}
From the relation \eqref{eq:normOfDtF-varF}, it follows that
\small\begin{align*}
\int_0^T&\E\Big[\int_0^T\E\Big[D_s\frac{\partial \mathcal H}{\partial z}(r)\big|\mathcal F_s\Big]^2 ds\Big]^{1/2}dr=\int_0^T\Big( \E\Big[\frac{\partial\mathcal H}{\partial z}(r)^2- \E\big[\frac{\partial\mathcal H}{\partial z}(r)\big]^2\Big] \Big)^{1/2}  dt<\infty.
\end{align*}\normalsize
Then, we can use the stochastic Fubini theorem (see e.g. \cite{Veraar}), and the following straightforward consequence of the Clark-Ocone Formula
\begin{align*}
 \frac{\partial \mathcal H}{\partial z}(r)=\E\Big[\frac{\partial \mathcal H}{\partial z}(r) \big|\mathcal F_t\Big]+\int_t^r \E\Big[D_s\frac{\partial \mathcal H}{\partial z}(r)\big|\mathcal F_s\Big] dB(s) &
\end{align*}
 to find that
\small\begin{align*}
 \int_t^T& q_2(s) dB(s)=\int_t^T \int_t^{T}\E\Big[D_s\frac{\partial \mathcal H}{\partial z}(r)\big|\mathcal F_s\Big]\boldsymbol 1_{[s,s+\delta]}(r)dr  dB(s)\\
&=\int_t^T\int_t^T \E\Big[D_s\frac{\partial \mathcal H}{\partial z}(r)\big|\mathcal F_s\Big]\boldsymbol 1_{[r-\delta,r]}(s) dB(r) ds\\
&=\int_t^{(t+\delta)\wedge T}\int_{t}^{r}\E\Big[D_s\frac{\partial \mathcal H}{\partial z}(r)\big|\mathcal F_s\Big] dB(s)dr+\int_{(t+\delta)\wedge T}^{T}\int_{r-\delta}^{r}\E\Big[D_s\frac{\partial \mathcal H}{\partial z}(r)\big|\mathcal F_s\Big] dB(s)dr\\
&=\int_t^{(t+\delta)\wedge T} \frac{\partial \mathcal H}{\partial z}(r)-\E\Big[\frac{\partial \mathcal H}{\partial z}(r) \big|\mathcal F_t\Big] dr+\int_{(t+\delta)\wedge T}^T\frac{\partial \mathcal H}{\partial z}(r)-\E\Big[\frac{\partial \mathcal H}{\partial z}(r)\big|\mathcal F_{r-\delta}\Big] dr\\
&=\int_t^T \frac{\partial \mathcal H}{\partial z}(r) dr-\int_{(t+\delta)\wedge T}^T\E\Big[\frac{\partial \mathcal H}{\partial z}(r) \big|\mathcal F_{r-\delta}\Big] dr -\int_t^{(t+\delta)\wedge T}\E\Big[\frac{\partial \mathcal H}{\partial z}(r) \big|\mathcal F_t\Big] dr\\
&=\int_t^T\E\Big[  \frac{\partial \mathcal H}{\partial z}(s)-\frac{\partial \mathcal H}{\partial z}(s+\delta)\boldsymbol 1_{[0,T-\delta]}(s)\Big|\mathcal F_s\Big]ds-p_2(t),
\end{align*}\normalsize
and hence \eqref{eq:Adjointpqr_2} holds.
\end{proof}

We notice that what this theorem  says, is that the 1D and the 2D maximum principles are essentially the same. However, we have two sets of adjoint equations, which can be an advantage in applications, as in general it can be extremely difficult to find solutions of both the 1D and the 2D time-advanced BSDE.

Also note that as a consequence of the theorem, a (unique) solution of the noisy memory BSDE exists whenever there exists a (unique) solution to the ABSDE~\eqref{eq: AdjointAlt1} and \eqref{eq: AdjointAlt2}. As mentioned, existence criteria for this ABSDE can be found in {\O}ksendal, Sulem and Zhang~\cite{OSZ1}.

\section{Application of the noisy memory maximum principle}
\label{sec: example}
As an example of the noisy memory optimal control problem, we consider two optimal consumption (optimal harvest) problem,

where the SDE for the state process $X(t)$ is given by
\begin{align}
dX(t) &= (a_0 Z(t) + a_1 X(t) - \pi(t)) dt + \sigma(t, X(t), Y(t), Z(t), \pi(t)) d B(t)  \nonumber\\[\smallskipamount]
&\quad
+\int_{\mb{R}} \gamma(t, X(t), Y(t), Z(t), \pi(t), \zeta) \tilde{N}(dt, d\zeta);  \hspace{1cm} t \in [0,T], \label{eq: exSDE}\\[\smallskipamount]
X(t) &= \xi(t);  \hspace{6.5cm} t \in [-\delta, 0].\nonumber
\end{align}
\noindent where $a_o, a_1 \in \mb{R}$, and $\sigma, \gamma$ are given functions satisfying the conditions of Section~\ref{sec: sufficient}. We say that $\pi\in\mathcal A_{\mathbb{G}}$ if 
$\pi>0$, $dP\times dt$ a.s.

Consider a performance functional $J(\pi)$ of $\pi \in \mc{A}_{\mb{G}}$ given by
\begin{align}
 J(\pi) := \E\Big[\int_0^T f(t,\pi(t)) dt + G(T)X(T)\Big],
\end{align}
where we assume that $f:\Omega\times[0,T] \times \mathcal V \rightarrow \mathbb R$ is concave with respect to $\pi$ for each $t$ and $\omega$. These assumptions are reasonable for a standard optimal consumption problem. 
We assume that $G(T)$ is a lognormal random variable (representing a stochastic terminal payoff price) of the form
\begin{equation} \label{eq7.3}
G(T) = \exp(\int_0^T \psi(t)dB(t))
\end{equation}
for some given deterministic function $\psi\in L^2([0,T])$.
We solve this noisy memory problem using the Malliavin stochastic maximum principle in Theorem~\ref{thm: sufficient}.

In this case, the  1D Hamiltonian is
\begin{align*}
&\mathcal H(t, x,y,z, \pi, p, q, r(\cdot)) \\
&= f(t,\pi) + (a_0 z + a_1 x - \pi)p + \sigma(t, x,y,z, u)q+ \int_{\mb{R}} \gamma(t, x,y,z, \zeta)r(\zeta)\nu(d\zeta),
\end{align*}
\noindent
The BSDE for the adjoint processes $p,q,r$ is given by equation~\eqref{eq: 2.10}, with
\small\begin{align*}
\mu(t) &= a_1 p(t) + \frac{\partial \sigma}{\partial x}(t)q(t) + \int_{\mb{R}} \frac{\partial \gamma}{\partial x}(t,\zeta)r(t,\zeta) \nu(\zeta) \\[\smallskipamount]
&+ \Big( \frac{\partial \sigma}{\partial y}(t+ \delta)q(t+ \delta) + \int_{\mb{R}} \frac{\partial \gamma}{\partial y}(t+ \delta,\zeta)r(t + \delta,\zeta) \nu(\zeta) \Big) \boldsymbol{1}_{[0,T-\delta]}(t) \\[\smallskipamount]
&+ \int_t^{t + \delta} \E \Big[ D_t \Big( a_0 p(t) + \frac{\partial \sigma}{\partial z}(s)q(s)  +  \int_{\mb{R}} \frac{\partial \gamma}{\partial z}(s,\zeta)r(s,\zeta) \nu(\zeta) \Big) \Big| \mc{F}_t \Big]  \boldsymbol{1}_{[0,T]}(s) ds.
\end{align*}\normalsize
\begin{example}[1D method]\label{example:Ft}
We now assume that
$$\sigma(t,X(t), Y(t), Z(t),\pi(t))=\sigma_0 (t) X(t),$$
where $\sigma_0(t)$ is a deterministic function and  $\gamma(t,\zeta)=0$ for all $t,\zeta$.  We also assume that $N=0$, so that $\{\mathcal F_t\}_{t \in [0,T]}$ is the natural filtration generated by the Brownian motion alone. Then the Hamiltonian is reduced to
 \begin{align*}
&\mathcal H(t, x,y,z, \pi, p, q, r(\cdot)) = f(t,\pi) + (a_0 z + a_1 x - \pi)p + \sigma_0(t)x q,
\end{align*}
and the adjoint equation takes the form:
\small
\begin{align}
\begin{cases}
 \label{eq7.4}
 dp(t) &= -\{ a_1 p(t) + \sigma_0(t) q(t) + a_0 \int_t^{t + \delta} \E \Big[ D_t \big( p(s)\big) | \mc{F}_t \Big] \boldsymbol{1}_{[0,T]}(s) ds \}dt + q(t) dB(t)\\[\smallskipamount]
p(T) &= G(T).
\end{cases}
\end{align}
\normalsize
Let us try a solution $p(t)$ of the form
\begin{align}\label{eq7.5}
p(t)&= p(0)\exp \big( \int_0^t \beta(s)dB(s)+ \int_0^t \{ \alpha(s) - \frac{1}{2}\beta^2(s) \}ds \big)\\
&=p(0) M(t) \exp \big( \int_0^t \alpha(s) ds \big)\nonumber\\
\end{align}
where $\alpha(s)$ and $\beta(s)$ are deterministic functions and $M(t)$ is the martingale
\begin{equation}
M(t) := \exp \big( \int_0^t \beta(s)dB(s)-\frac{1}{2} \int_0^t \beta^2(s) \}ds \big).
\end{equation}
Then by the chain rule for Malliavin derivatives we have
\begin{equation}
D_t p(s) = p(s) \beta(t) \boldsymbol{1}_{[t \leq s]}.
\end{equation}

Substituted into \eqref{eq7.4} this gives
\begin{align}\label{eq7.8}
\begin{cases}
dp(t) = \{ -A(t)p(t)+\sigma_0(t) q(t) \} dt + q(t) dB(t)\\
p(T)=G(T)
\end{cases}
\end{align}
where
\begin{equation}\label{eq7.9}
A(t) = a_1 + a_0 \beta(t) \int _t^{(t+\delta) \wedge T}\exp(\int_t^s \alpha(r)dr)ds.
\end{equation}
The solution of the linear BSDE \eqref{eq7.8} is (see e.g. Theorem 1.7 in \cite{OksendalSulemRiskMin})
\begin{equation}
p(t) = \frac{1}{\Gamma(t)}E[G(T)\Gamma(T) | \mathcal{F}_t],
\end{equation}
where
\begin{equation}
\begin{cases}
d\Gamma(t) = \Gamma(t) [ A(t) dt +\sigma_0(t) dB(t)]\\
\Gamma(0)=1,
\end{cases}
\end{equation}
i.e.
\begin{align}\label{eq7.12}
p(t)& = C \exp \big( \int_0^t \psi(s) dB(s) +\int_0^t \{ \frac{1}{2}\sigma_0(s)^2 -\frac{1}{2}(\sigma_0(s) + \psi(s))^2 -A(s)\}ds\big)
\end{align}
where 
\small
\begin{align}\label{eq7.13}
C= \exp \big( \int_0^T \{ A(s) - \frac{1}{2} \sigma_0(s)^2 + \frac{1}{2}(\sigma_0(s) + \psi(s))^2 \}ds \big).
\end{align}
\normalsize

Comparing \eqref{eq7.5} and \eqref{eq7.12}-\eqref{eq7.13} we see that if we choose
\begin{align}
\begin{cases}
\beta(t)=\psi(t) \\
\alpha(t) = \frac{1}{2}\sigma_0(t)^2 -\frac{1}{2}(\sigma_0(t) + \psi(s))^2 -A(t)\\
p(0)= C
\end{cases}
\end{align}
then $p(t)$ given by \eqref{eq7.5}, with the corresponding $q(t)=D_tp(t)=\beta(t)p(t)$ solve the BSDE \eqref{eq7.4}.
The first order (necessary) condition for the maximisation of the  Hamiltonian is
\begin{align}
\label{eq: FOC}
 \E\Big[ \frac{\partial f}{\partial \pi}(t,\pi(t)) | \mc{G}_t\Big] = \E\Big[p(t) | \mc{G}_t\Big].
\end{align}

Note that if $\mb{G}=\mb{F}$, then this reduces to
\begin{align}\label{eq:FOC2}
  \frac{\partial f}{\partial \pi}(t,\pi(t))= p(t).
\end{align}
since $f$, $\pi$ and $p$ are adapted to $\mb{F}$.\\

%
By the noisy memory  necessary maximum principle, Theorem~\ref{Thrm:NMP_I}, if $\pi^*$ is an optimal control, then $\pi^*$ solves~\eqref{eq: FOC}. Since  $f$ is concave, this is also a sufficient condition for optimality of $\pi^*$ by Theorem \ref{thm: sufficient}. 
\end{example}

Notice also the contribution of the noisy memory  term to the optimal solution: If we solve the same problem in Example \ref{example:Ft} without the memory term, i.e. where the SDE for the state process $X(t)$ is given by
\begin{align}\label{eq: exSDE_2}
\begin{split}
dX(t) &= (a_1 X(t) - \pi(t)) dt + \sigma_0(t) X(t)) d B(t)  
\hspace{1.7cm}t \in [0,T], \\
X(t) &= \xi(t);  \hspace{6.5cm} t \in [-\delta, 0].
\end{split}
\end{align}
(where $a_1 \in \mb{R}$, and $\sigma$  as above), the stochastic maximum principle implies that the first order condition for maximisation of the Hamiltonian is still given by
\begin{align}
\label{eq: FOC_2}
\frac{\partial f}{\partial \pi}(t,\pi(t)) = p(t).
\end{align}
However, in this case, the solution $p(t)$ of the adjoint BSDE given by the simpler expression
\begin{align}
 p(t) = \exp(\int_0^t \psi(s) dB(s) + \frac{1}{2} \int_t^T \psi^2(s) ds + a_1(T-t)),
\end{align}
which clearly is a different solution (actually, a special case) than in the example.

In  Example \ref{example:Ft}, we considered a class of optimization problems  where the dependence on the noisy memory gives us a different optimal solutuion than the corresponding equation without the noisy memory term (with the exception of the ``trivial`` case with $\psi=0$). 

Now, we give instead an example of a large class of problems where the dependence on the noisy memory term does not affect the closed form of the optimal solution.

\begin{example}[2D method]\label{example:SmallerGt}
 Consider the optimization problem given in the beginning of this section, this time with the only restriction being that $G(T)=1$.

 If we try to solve this stochastic control problem using the maximum principle from Section~\ref{sec: reduction}, we find that the 2D adjoint equations are given by
\begin{align}\label{eq: 2DAdjointExample}
\begin{split} dp_1(t)&=-\E[\mu_1(t)|\mathcal F_t]dt+q_1(t)d B(t)+ \int_{\mathbb R_0}r_1(t,\zeta)\tilde N(dt,d\zeta)\\
d p_2(t)&=-\E[\mu_2(t)|\mathcal F_t]dt+q_2(t)d B(t)+ \int_{\mathbb R_0}r_2(t,\zeta)\tilde N(dt,d\zeta),\\
p_1(T)&=1,\\
p_2(T)&=0,\end{split}
\end{align}where
\small\begin{align*}
\mu_1(t) &= a_1 p_1(t) + \frac{\partial \sigma}{\partial x}(t)q_1(t) + \int_{\mb{R}} \frac{\partial \gamma}{\partial x}(t,\zeta)r_1(t,\zeta) \nu(\zeta)+q_2(t) \\[\smallskipamount]
&+ \Big( \frac{\partial \sigma}{\partial y}(t+ \delta)q_1(t+ \delta) + \int_{\mb{R}} \frac{\partial \gamma}{\partial y}(t+ \delta,\zeta)r_1(t + \delta,\zeta) \nu(\zeta) \Big) \boldsymbol{1}_{[0,T-\delta]}(t),
\\ 
\mu_2(t) &= a_0 p_1(t) + \frac{\partial \sigma}{\partial z}(t)q_{1}(t) + \int_{\mb{R}} \frac{\partial \gamma}{\partial z} r_1(t,\zeta) \nu(d\zeta) \\[\smallskipamount]\nonumber
&- \boldsymbol{1}_{[0,T-\delta]}(t) \Big(a_0 p_1(t+\delta) + \frac{\partial \sigma}{\partial z}(t + \delta))q_{1}(t + \delta) 
+ \int_{\mb{R}} \frac{\partial \gamma}{\partial z}(t+\delta,\zeta)r_1(t+\delta,\zeta) \Big).
\end{align*}\normalsize
 One can easily verify that the processes $q_1=q_2=0$, $r_1=r_2=0$ and \begin{align}p_1(t)=e^{a_1(T-t)},&&p_2(t):=a_0\int_t^T p_1(s)-\boldsymbol 1_{[0,T-\delta]}(s) ds\end{align}
solve the equation (\ref{eq: 2DAdjointExample}). Since $p_1=p$, inserting the $2D$ adjoint processes into the $2D$ necessary maximum principle also yields the first order condition \eqref{eq: FOC}. A generalization of this problem, including a comparison of the $1D$ and $2D$ approaches, is given as an example in Section \ref{sec:generalization}.
\end{example}
\section{A generalized noisy memory control problem}\label{sec:generalization}
The Malliavin approach can be  extended to situations where the 2D approach is not applicable, e.g. if the \emph{noisy memory process} $Z(t)$ from \eqref{eq:NoisyMemoryMainX} -\eqref{eq:NoisyMemoryMainZ} is replaced by \emph{the generalized noisy memory process}
\begin{align*}
 Z'(t):=\int_{t-\delta}^t \phi(t,s)X(s)dB(s),
\end{align*}
where $\phi:\Omega\times[0,T]\times[-\delta,T]\rightarrow \mathbb R,$ is bounded, jointly measurable and with $\phi(\cdot, r,\cdot)$  adapted to $\{\mathcal F_t\}_{t\in[r-\delta,r]}$ for each fixed $r\in[0,T]$. Such a state equation can not in general be reduced to a 2D discrete delay equation, due to the dependence on $t$ in $\phi$. 

If we also replace
 $\mu$ in the adjoint equation \eqref{eq: 2.10}-\eqref{eq: 2.11} by
\begin{align}\begin{split}
 \mu'(t)&= \frac{\partial{\mc{H}}}{\partial{x}}(t) + \frac{\partial{\mc{H}}}{\partial{y}}(t + \delta) \boldsymbol{1}_{[0,T-\delta]}(t)\\
&\quad+ \int_{t}^{t+\delta} \mb{E} \Big[ D_t \big( \frac{\partial{\mc{H}}}{\partial{z}}(s) \big) | \mc{F}_t \Big]\phi(t,s) \boldsymbol{1}_{[0,T]}(s)ds, \end{split}\tag{\ref{eq: 2.11}'},
\end{align}(and otherwise leave the set-up exactly as in Section \ref{sec: TheProblem}),
it is fairly straightforward to show that Theorems \ref{thm: sufficient}, \ref{Thrm:NMP_I}, \ref{thrm:NMP_II} are valid also for this \emph{generalized Noisy memory problem}.

The proofs can be carried out by mimicking our proofs from Sections \ref{sec: sufficient}-\ref{sec: NMP}.  In addition to replacing $Z$ by $Z'$ in all of Section \ref{sec: TheProblem}, the only difference from the proofs in Sections \ref{sec: sufficient}-\ref{sec: NMP} is that we need to 
\begin{itemize}
 \item
replace $K$ by $K'$, and $$\int_{t-\delta}^t K(s) dB(s) \quad\textnormal{by}\quad\int_{t-\delta}^t K'(s)\phi(t,s)dB(s)  , $$ with $K'$ satisfying
\small \begin{align}
  d K'(t)&= (K'(t),  K'(t-\delta), \int_{t-\delta}^t  K'(r)\phi(t,r) dB(r),\eta(t))\cdot\Big[\nabla b(t,\mathbf X(t), \pi(t))\, dt \nonumber\\
& +\nabla \sigma(t,\mathbf X(t), \pi(t))\, dB(t)
+\int_{\mathbb{R}_0}\nabla\gamma (t,\mathbf X(t), \pi(t),\zeta) \tilde N(dt,d\zeta)\Big]^\mathsf{T} \tag{\ref{eq:derivativeProcess}'}\\
 K'(t)&=0,\quad t\in[-\delta,0],\nonumber
\end{align}\normalsize
throughout section \ref{sec: NMP}
\item
replace terms of the form
$$\E[D_t\frac{\partial \mathcal H}{\partial z}(s)|\mathcal F_t]\quad\textnormal{by }\quad\E[D_t\frac{\partial \mathcal H}{\partial z}(s)|\mathcal F_t]\phi(t,s)$$ in
\eqref{eq:Z1}-\eqref{eq:Z2}, \eqref{eq: 2.21},\eqref{eq:NMPlastLemma2} and \eqref{eq:NMPlastLemma3}.
\end{itemize}
\begin{example}
Reconsider, the optimal consumption problem from Example \ref{example:SmallerGt}, this time depending on the generalized noisy memory process $Z'(t)$:
\begin{align}
dX(t) &= (Z'(t) + a_1 X(t) - \pi(t)) dt + \sigma(t, X(t), Y(t), Z'(t), \pi(t)) d B(t)  \nonumber\\[\smallskipamount]
&\quad+\int_{\mb{R}} \gamma(t, X(t), Y(t), Z'(t), \pi(t), \zeta) \tilde{N}(dt, d\zeta);  \hspace{1cm} t \in [0,T], \label{eq: exSDE1}\\[\smallskipamount]
X(t) &= \xi(t);  \hspace{6.5cm} t \in [-\delta, 0].\nonumber
\end{align}with
\begin{align*}
Z'(t)=\int_{t-\delta}^t \phi(t, s)X(s)dB(s).
\end{align*}
Here $X(t)$ is a cash flow and $\pi$ is the consumption rate. We let $\phi$ be deterministic.  It is reasonable to choose $\phi(t,\cdot)$ as a function gradually increasing from $0$ at time ${t-\delta}$ to some $a_0\in\mathbb R$ at time $t$, however this is not necessary for the analysis. We leave the performance functional and the set of admissible controls as in Section \ref{sec: example}. Then, the Hamiltonian is
\begin{align*}
&\mathcal H(t, x,y,z, u, p, q, r(\cdot)) \\
&= f(t, u) + ( z + a_1 x - u)p + \sigma(t, x,y,z, u)q+ \int_{\mb{R}} \gamma(t, x,y,z, \zeta)r(\zeta)\nu(d\zeta),
\end{align*}
and the adjoint equation has a deterministic solution satisfying
\begin{align*}
dp(t) &= -\{ a_1 p(t) + \int_t^{t + \delta} \E \Big[ D_t \big( p(s)\big) | \mc{F}_t \Big]\phi(t, s) \boldsymbol{1}_{[0,T]}(s) ds \}dt, \\[\smallskipamount]
p(T) &= 1,
\end{align*} i.e. $p(t) =  e^{a_1(T-t)}, q=0, r=0$, as in Example \ref{example:SmallerGt}. The first order condition for maximality of $\pi^*$ is
\begin{align}\label{ex:sufficientCond}
\E\Big[ \frac{\partial f}{\partial u}(t,\pi^*(t)) | \mc{G}_t\Big] = p(t).
\end{align} We can verify using the sufficient maximum principle (Theorem \ref{thm: sufficient}) that \eqref{ex:sufficientCond} is indeed a sufficient condition for optimality of $\pi^*$. We notice in particular that the dependence on the noisy memory process $Z'(t)$ does not affect our choice of optimal strategy.

\end{example}

\appendix

\nocite{MR754561}
\end{document}